\documentclass[12pt]{article}
\usepackage{amssymb,amsmath,amsthm,tikz,multirow, mathdots}
\usepackage{enumerate} 
\usepackage{hyperref} 
\usepackage [autostyle, english = british]{csquotes}
\usepackage [english]{babel}
\usepackage{mathtools}
\usepackage{bm}
\usepackage{authblk} 

\usepackage{comment} 

\usetikzlibrary{calc, arrows, arrows.meta, math} 


\newtheorem*{theorem*}{Theorem}

\theoremstyle{definition}
\newtheorem*{definition*}{Definition}
\newtheorem*{case*}{Case}
\newtheorem*{subcase*}{Subcase}
\newtheorem*{subsubcase*}{Subsubcase}

\theoremstyle{plain}
\newtheorem{thm}{Theorem}[section]
\newtheorem{lem}[thm]{Lemma}
\newtheorem{prop}[thm]{Proposition}

\theoremstyle{definition}

\theoremstyle{remark}

\numberwithin{equation}{section}

\newcommand{\AVC}{\text{AVC}} 

\newcommand{\bvert}{\vrule width 2pt}

\newcommand{\quotes}[1]{``#1''} 

\newcommand{\arcThroughThreePoints}[4][]{
\coordinate (middle1) at ($(#2)!.5!(#3)$);
\coordinate (middle2) at ($(#3)!.5!(#4)$);
\coordinate (aux1) at ($(middle1)!1!90:(#3)$);
\coordinate (aux2) at ($(middle2)!1!90:(#4)$);
\coordinate (center) at ($(intersection of middle1--aux1 and middle2--aux2)$);
\draw[#1] 
 let \p1=($(#2)-(center)$),
      \p2=($(#4)-(center)$),
      \n0={veclen(\p1)},       
      \n1={atan2(\y1,\x1)}, 
      \n2={atan2(\y2,\x2)},
      \n3={\n2>\n1?\n2:\n2+360}
    in (#2) arc(\n1:\n3:\n0);
}

\providecommand{\keywords}[1]{\noindent \textit{Keywords:} #1}
\providecommand{\subject}[1]{\noindent \textit{Mathematics Subject Classification:} #1}

\title{Dihedral Tilings of the Sphere by Regular Polygons and Quadrilaterals I: Quadrilaterals with Equal Opposite Edges}
\author[]{Hoi Ping LUK}
\affil[]{The Hong Kong University of Science \& Technology} 
\affil[]{email: hoi@connect.ust.hk}

\begin{document}
\maketitle

\begin{abstract} We classify the dihedral edge-to-edge tilings of the sphere by regular polygons and quadrilaterals with equal opposite edges (edge configuration $xyxy$). \\

\keywords{Classification, Spherical tilings, Dihedral tilings, Spherical polygons, Division of spaces} \\

\subject{05B45, 52C20, 51M09, 51M20}
\end{abstract}

\section{Introduction}

The history of the studies on spherical tilings can be traced back to Plato ($5$ Platonic solids) and Archimedes ($13$ Archimedean solids). Recently, there are two major breakthroughs in the research on spherical tilings. One of them is the classification of tilings of the sphere by regular polygons \cite{aehj, joh, zal}. Another one is the classification of monohedral edge-to-edge tilings of the sphere, which was pioneered by Sommerville \cite{som} and completed through a collective effort \cite{awy, ay, cl, cl2, cly, gsy, ua, wy, wy2}. 

This series is the start of a classification of the dihedral edge-to-edge tilings of the sphere. In a monohedral tiling, every tile is congruent to one polygon, which we call a {\em prototile}. In a dihedral tiling, there are two prototiles. One of the prototiles in this series is a regular polygon. 

This paper is the first of the series. The two prototiles are one regular polygon ($m$-gon with edge configuration $x^m$, where $m\ge3$) and one quadrilateral with equal opposite edges $x,y$ (edge configuration $xyxy$, where $x \neq y$). For each $m\ge3$, the regular polygon has angles $\alpha$ and the quadrilateral has equal opposite angles $\beta, \gamma$. The prototiles are depicted in Figure \ref{Fig-a4-abab-angles}, where the quadrilateral is unshaded and the regular polygons are shaded. Throughout our discussion, the shaded tiles are always regular polygons. We assume that the degree of each vertex is $\ge3$. 

\begin{figure}[h!] 
\centering
\begin{tikzpicture}

\tikzmath{
\s=1;
\r=0.8;
\gon=4;
\th=360/\gon;
\x=\r*cos(\th/2);
\R=\r;
\ph=360/5;
\xx=\r*cos(\ph/2);
}

\begin{scope}[] 

\foreach \a in {0,...,3} {
\draw[rotate=\th*\a]
	(90-0.5*\th:\r) -- (90+0.5*\th:\r) 
;

}

\draw[line width=2]
	(90+0.5*\th:\r)  -- (90+1.5*\th:\r)
	(90-0.5*\th:\r)  -- (90-1.5*\th:\r)
;

\foreach \c in {0,1} {

\node at (0.5*\th+2*\th*\c:0.65*\r) {\small $\gamma$};

\node at (\th+2*\th*\c: 1*\r) {\small $x$}; 
\node at (0+2*\th*\c: 1*\r) {\small $y$}; 

}

\node at (1.55*\th:0.625*\r) {\small $\beta$};
\node at (3.5*\th:0.625*\r) {\small $\beta$};

\end{scope}

\begin{scope}[xshift=2.5*\s cm, yshift=-0.15*\s cm] 

\fill[gray!40]
	(90:\r) -- (90+120:\r) --  (90+2*120:\r) -- cycle
;

\foreach \a in {0,1,2} {

\draw[rotate=\a*120]
	(90:\r) -- (90+120:\r)
;

\node at (90+\a*120:0.55*\r) {\small $\alpha$};

\node at (90-0.5*120+\a*120:0.8*\r) {\small $x$};

}

\end{scope}

\begin{scope}[xshift=5*\s cm] 

\fill[gray!40]
	(0.5*\th:\r) -- (1.5*\th:\r) -- (2.5*\th:\r)  -- (3.5*\th:\r) -- cycle
;

\foreach \a in {0,...,3} {
\draw[rotate=\th*\a]
	(90-0.5*\th:\r) -- (90+0.5*\th:\r) 
;

}

\foreach \c in {0,...,3} {

\node at (0.5*\th+\th*\c:0.65*\r) {\small $\alpha$};

\node at (\th+\th*\c: 1*\r) {\small $x$}; 

}

\end{scope}

\begin{scope}[xshift=7.5*\s cm] 

\fill[gray!40]
	(90:\R) -- (90+\ph:\R) -- (90+2*\ph:\R) -- (90+3*\ph:\R) -- (90+4*\ph:\R) -- cycle
;

\foreach \a in {0,...,4} {

\draw[rotate=\a*\ph]
	(90-1*\ph:\R) -- (90:\R)
;

\node at (90+\a*\ph:0.7*\R) {\small $\alpha$};

\node at (\th-0.5*\ph+\ph*\a: 1.1*\r) {\small $x$}; 

}

\node at (2.5*\r, 0) {\Large $\cdots$};

\end{scope}

\end{tikzpicture}
\caption{The quadrilateral with edges $x,y$ and angles $\beta,\gamma$; and the regular polygons with edges $x$ and angles $\alpha$}
\label{Fig-a4-abab-angles}
\end{figure}
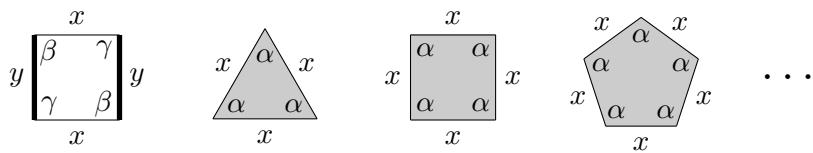

In the subsequent works \cite{cl3, luk}, edges of both prototiles have equal length. Other studies on dihedral tilings of the sphere with an extra assumption (folding type) can be seen in \cite{as, br, bs, brs}. 

For simplicity, by {\em the quadrilateral} we mean the forementioned quadrilateral and by {\em dihedral tilings} we mean those by regular polygons and the quadrilaterals with equal opposite edges. We may omit mentioning dihedral tilings when the context is clear and obvious.

The main result is given below, where $f$ denotes the number of tiles.

\begin{theorem*} The dihedral tilings of the sphere by regular polygons with gonality $m\ge3$ and quadrilaterals with equal opposite edges are,
\begin{enumerate}[I.]
\item Prism type: one infinite family of tilings with $f=m+2$, and vertex $\{ \alpha\beta\gamma \}$;
\item Sporadic type: specific triangle subdivisions of deformed prism type tilings with $3 \le m \le 6$.
\end{enumerate}
\end{theorem*}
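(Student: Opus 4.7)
The plan is to reduce the classification to a finite enumeration of admissible vertex types, by combining the spherical area formula with a sharp edge-type parity constraint, and then to reconstruct the global structure from each surviving type.

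First, I would set up the numerical framework. If $f_1$ denotes the number of regular $m$-gons and $f_2$ the number of quadrilaterals, then summing spherical areas gives
\[ f_1\bigl(m\alpha - (m-2)\pi\bigr) + f_2(2\beta + 2\gamma - 2\pi) = 4\pi. \]
Because the regular polygon carries only $x$-edges while each quadrilateral alternates $xyxy$, every $y$-edge is shared by exactly two quadrilaterals, so $E_y = f_2$ and $2E_x = mf_1 + 2f_2$. Euler's formula together with the hypothesis that every vertex has degree $\ge 3$ then yields strong linear constraints on $(m, f_1, f_2)$ and on the individual angles $\alpha, \beta, \gamma$.

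Second, I would enumerate vertex types. At each vertex one needs $a\alpha + b\beta + c\gamma = 2\pi$ with $a+b+c \ge 3$, and a decisive local restriction is that $\alpha$ is flanked by two $x$-edges, while $\beta$ and $\gamma$ each have one $x$-side and one $y$-side; consequently the $y$-edges at a vertex appear only inside blocks of consecutive $\beta$'s and $\gamma$'s, and the total count of $\beta$'s and $\gamma$'s at a vertex containing any $y$-edge must be even. Combining this parity with the area equation and the angle inequalities $\alpha > (m-2)\pi/m$, $\alpha,\beta,\gamma<\pi$ should cut the candidate list down to $\{\alpha\beta\gamma\}$ as the principal vertex type, together with a short list of auxiliary types that survive only when $m$ is small.

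Finally, once $\{\alpha\beta\gamma\}$ is the unique vertex type, counting the $\alpha$-incidences ($mf_1$) against the total number of vertices and the $\beta$- and $\gamma$-incidences ($2f_2$ each) gives $mf_1 = 2f_2$, and Euler's formula then forces $f_1 = 2$ and $f_2 = m$; matching edge types around each regular polygon reveals the two $m$-gons as antipodal caps joined by a belt of $m$ quadrilaterals along $y$-edges, which is precisely the prism family with $f = m+2$. For the sporadic part, the area equation bounds $m \le 6$, and for each $m \in \{3,4,5,6\}$ I would enumerate the additional admissible vertex types and explicitly construct the triangle subdivisions obtained by deforming a prism tiling, verifying each of the finitely many resulting configurations. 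The main obstacle is the vertex enumeration in the second step: a priori many tuples $(a,b,c)$ satisfy $a\alpha+b\beta+c\gamma=2\pi$, and ruling out each non-prism vertex requires combining the area equation with the edge-type parity and a delicate local analysis around each tile, and the sporadic case analysis for each small $m$ is similarly subtle because the deformations break the symmetry of the prism and must be checked by hand.
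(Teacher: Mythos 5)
There is a genuine gap: the decisive steps of the classification are precisely the ones you defer. Your claim that the area equation, Euler's formula and the parity constraint ``should cut the candidate list down to $\{\alpha\beta\gamma\}$'' is not a proof, and the numerical apparatus you set up is not what actually does the cutting in this problem. The paper's reduction rests on three ingredients absent from your plan: (i) a global Counting Lemma (the total number of $\beta$'s in the tiling equals the total number of $\gamma$'s, so $\beta^2\cdots$ is a vertex iff $\gamma^2\cdots$ is), which is what forces $\gamma^2\cdots$ to appear once $\alpha\beta^2$ does and drives the contradiction for $m\ge4$; (ii) the observation that the two endpoints of an edge shared by an $m$-gon and a quadrilateral produce both an $\alpha\beta\cdots$ and an $\alpha\gamma\cdots$ vertex, which yields $\alpha+\beta+\gamma\le2\pi$ and hence $\alpha,\gamma<\pi$ (you assert $\alpha,\beta,\gamma<\pi$ without justification; a priori the quadrilateral could be non-convex); and (iii) the cyclic angle arrangement of the quadrilateral ($\gamma,\beta,\gamma,\beta$, so adjacent angles are never two $\beta$'s), exploited through the $x$/$y$ adjacency notation, which is the engine of every local contradiction (e.g.\ two consecutive $\beta$'s forced in a single tile) and of the tile-by-tile propagation. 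Without (iii) your ``matching edge types around each regular polygon'' and your sporadic constructions cannot be carried out.

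Separately, your treatment of the sporadic family misreads the statement and would not close the argument. The bound $3\le m\le6$ is \emph{not} obtained from the area equation, and it does not refer to the gonality of the regular prototile: every sporadic tiling has regular \emph{triangles} as its regular prototile, and $m$ there is the gonality of the underlying deformed prism whose two $m$-gonal faces are subdivided into triangles. For $m\ge4$ the paper must \emph{exclude} all non-prism tilings, which requires showing $\alpha\beta^2$ forces $\beta>\tfrac23\pi>\alpha$, hence $m\in\{4,5\}$, and then killing each of $m=4,5$ by an explicit local configuration argument; no area or Euler computation substitutes for this. Your final counting step for the prism family ($V=mf_1=2f_2$, $E=\tfrac32V$, hence $f_1=2$, $f_2=m$) is correct and is a slightly different route to the combinatorics than the paper's direct propagation, but it only constrains the face vector and does not by itself establish existence or uniqueness of the tiling.
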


The prism type tilings are illustrated in Figures \ref{Fig-a3/am-abab-Tilings}. The sporadic tilings are illustrated in Figure \ref{Fig-a3-abab-Sporadic-Tilings}.


\begin{figure}[h!]
\centering
\begin{tikzpicture}[scale=1]

\tikzmath{
\s=1;
\ss=0.8;
\sss=0.75;
}

\begin{scope}[]

\tikzmath{
\r=0.35;
\th=360/3;
\e=1.325;
}

\fill[gray!40]
	(90:\e*3*\r) -- (90+\th:\e*3*\r) -- (90+2*\th:\e*3*\r) -- cycle
;

\fill[white]
	(90:3*\r) -- (90+\th:3*\r) -- (90+2*\th:3*\r) -- cycle
;

\fill[gray!40]
	(90:\r) -- (90+\th:\r) -- (90+2*\th:\r) -- cycle
;

\foreach \t in {0,...,2} {

\draw[rotate=\t*\th]
	(90:\r) -- (90+\th:\r)
	(90:3*\r) -- (90+\th:3*\r)
;

\draw[line width=1.75, rotate=\t*\th]
	(90:\r) -- (90:3*\r) 
;

\node at (90+0.15*\th+\t*\th: 1.1*\r) {\scalebox{0.5}{$\diamond$}};
\node at (90-0.075*\th+\t*\th: 2*\r) {\scalebox{0.5}{$\diamond$}};

}

\end{scope} 

\begin{scope}[xshift=2.75*\s cm, yshift=0.35*\s cm] 

\tikzmath{
\r=0.55;
\th=360/4;
\sh=\r;
\ss=1.75;
}

\fill[gray!40, scale=1]
	(\ss*\sh,\ss*\sh) -- (\ss*\sh,-\ss*\sh) -- (-\ss*\sh,-\ss*\sh) -- (-\ss*\sh,\ss*\sh) -- cycle
;

\fill[white]
	(0.5*\th:2*\r) -- (1.5*\th:2*\r) -- (2.5*\th:2*\r)  -- (3.5*\th:2*\r) -- cycle
;

\fill[gray!40]
	(0.5*\th:\r) -- (1.5*\th:\r) -- (2.5*\th:\r)  -- (3.5*\th:\r) -- cycle
;

\foreach \a in {0,...,3} {

\draw[rotate=\th*\a]
	(0.5*\th:\r) -- (1.5*\th:\r)
	(0.5*\th:\r) -- (0.5*\th:\r+\r)
	(0.5*\th:\r+\r) -- (1.5*\th:\r+\r)
;

\draw[rotate=\th*\a, line width=1.75]
	(0.5*\th:\r) -- (0.5*\th:2*\r) 
;

\node at (0.5*\th+0.15*\th+\a*\th: 1.1*\r) {\scalebox{0.5}{$\diamond$}};
\node at (0.5*\th-0.1*\th+\a*\th: 1.5*\r) {\scalebox{0.5}{$\diamond$}};

}

\end{scope}

\begin{scope}[xshift=5.75*\s cm, yshift=0.35*\s cm] 

\tikzmath{
\r=0.5;
\g=5;
\ph=360/\g;
\e=2.35;
}

\fill[gray!40]
	(90:\e*\r) -- (90+1*\ph:\e*\r) -- (90+2*\ph:\e*\r) -- (90+3*\ph:\e*\r) -- (90+4*\ph:\e*\r) -- cycle
;

\fill[white]
	(90:2*\r) -- (90+1*\ph:2*\r) -- (90+2*\ph:2*\r) -- (90+3*\ph:2*\r) -- (90+4*\ph:2*\r) -- cycle
;

\fill[gray!40]
	(90:\r) -- (90+1*\ph:\r) -- (90+2*\ph:\r) -- (90+3*\ph:\r) -- (90+4*\ph:\r) -- cycle
;

\foreach \p in {0,...,4} {

\draw[rotate=\p*\ph]
	(90-1*\ph:\r) -- (90:\r)
	(90:\r) -- (90:2*\r)
	(90-1*\ph:2*\r) -- (90:2*\r)
;

\draw[rotate=\p*\ph, line width=1.75]
	(90:\r) -- (90:2*\r) 
;

\node at (90+0.15*\ph+\p*\ph: 1.1*\r) {\scalebox{0.5}{$\diamond$}};
\node at (90-0.12*\ph+\p*\ph: 1.6*\r) {\scalebox{0.5}{$\diamond$}};

}

\end{scope}

\node at (7.85*\s,0.5*\s) {\Large $\cdots$};

\begin{scope}[xshift=10*\s cm, yshift=0.4*\s cm] 

\tikzmath{
\r=0.5;
\ps=360/7;
}

\filldraw[gray!40] (0,0) circle (2.325*\r);

\filldraw[white] (0,0) circle (2*\r);

\filldraw[gray!40] (0,0) circle (\r);

\foreach \a in {0,1,2,5,6} {

\draw[rotate=\a*\ps]
	(90:\r) -- (90:2*\r)
;

\draw[rotate=\a*\ps, line width=1.75]
	(90:\r) -- (90:2*\r) 
;

\node at (90+0.2*\ps+\a*\ps: 1.18*\r) {\scalebox{0.5}{$\diamond$}};
\node at (90-0.12*\ps+\a*\ps: 1.8*\r) {\scalebox{0.5}{$\diamond$}};

}

\draw (0,0) circle (\r);

\draw (0,0) circle (2*\r);

\node at (270:1.5*\r) {\small $\cdots$};

\end{scope}

\end{tikzpicture}
\caption{The infinite family of tilings of prism type, $\diamond=\beta$}
\label{Fig-a3/am-abab-Tilings}
\end{figure}
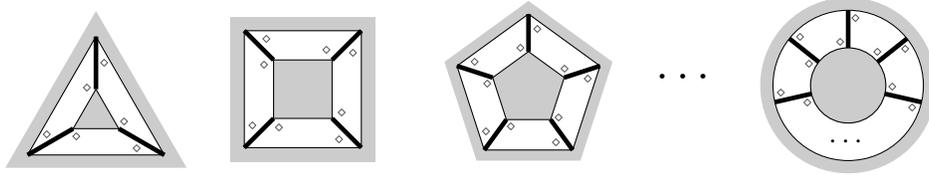


\begin{figure}[h!]
\centering
\begin{tikzpicture}[]

\tikzmath{
\s=1;
}

\begin{scope}[] 

\tikzmath{
\r=0.5;
\th=360/4;
\sh=\r;
\ss=1.75;
}

\fill[gray!40]
	(\ss*\sh,\ss*\sh) -- (\ss*\sh,-\ss*\sh) -- (-\ss*\sh,-\ss*\sh) -- (-\ss*\sh,\ss*\sh) -- cycle
;

\fill[white]
	(0.5*\th:2*\r) -- (1.5*\th:2*\r) -- (2.5*\th:2*\r)  -- (3.5*\th:2*\r) -- cycle
;

\fill[gray!40]
	(0.5*\th:\r) -- (1.5*\th:\r) -- (2.5*\th:\r)  -- (3.5*\th:\r) -- cycle
;

\foreach \a in {0,...,3} {

\draw[rotate=\th*\a]
	(0.5*\th:\r) -- (1.5*\th:\r)
	(0.5*\th:\r) -- (0.5*\th:\r+\r)
	(0.5*\th:\r+\r) -- (1.5*\th:\r+\r)
;

\draw[rotate=\th*\a, line width=1.75]
	(0.5*\th:\r) -- (0.5*\th:2*\r) 
;

}

\draw[]
	(0.5*\th:\r) -- (2.5*\th:\r)
	(1.5*\th:2*\r) -- (1.5*\th:2.5*\r)
	(3.5*\th:2*\r) -- (3.5*\th:2.5*\r)
;

\draw[dashed]
	(1.5*\th:2*\r) -- (1.5*\th:3*\r)
	(3.5*\th:2*\r) -- (3.5*\th:3*\r)
;

\foreach \a in {0,1} {

\node at (1.5*\th+0.12*\th+\a*2*\th: 1.15*\r) {\scalebox{0.5}{$\diamond$}};
\node at (1.5*\th-0.12*\th+\a*2*\th: 1.15*\r) {\scalebox{0.5}{$\diamond$}};

\node at (0.5*\th+0.085*\th+\a*2*\th: 1.55*\r) {\scalebox{0.5}{$\diamond$}};
\node at (0.5*\th-0.085*\th+\a*2*\th: 1.55*\r) {\scalebox{0.5}{$\diamond$}};

}

\end{scope}

\begin{scope}[xshift=2.5*\s cm] 

\tikzmath{
\r=0.4;
\th=360/4;
\x = \r*cos(0.5*\th);
\ph=360/5;
\sh=\r;
\ss=2;
\e=1.35;
}

\fill[gray!40]
	(90:\e*2*\r) -- (90-\ph:\e*2*\r) -- (90-2*\ph:\e*2*\r) -- (90-3*\ph:\e*2*\r) -- (90-4*\ph:\e*2*\r) --  cycle
;

\fill[white]
	(90:2*\r) -- (90-\ph:2*\r) -- (90-2*\ph:2*\r) -- (90-3*\ph:2*\r) -- (90-4*\ph:2*\r) --  cycle
;

\fill[gray!40]
	(90:\r) -- (90-\ph:\r) -- (90-2*\ph:\r) -- (90-3*\ph:\r) -- (90-4*\ph:\r) --  cycle
;

\foreach \a in {0,...,4} {

\draw[rotate=\ph*\a]
	(0,0) -- (90-1*\ph:\r)
	(90:\r) -- (90-1*\ph:\r)
	(90:\r) -- (90:2*\r)
	(90:2*\r) -- (90-1*\ph: 2*\r)
	(90:2*\r) -- (90:2.75*\r)
;

\draw[rotate=\ph*\a, line width=1.75]
	(90-1*\ph:\r) -- (90-1*\ph:2*\r) 
;

\node at (90+0.2*\ph+\a*\ph: 1.1*\r) {\scalebox{0.5}{$\diamond$}};
\node at (90-0.15*\ph+\a*\ph: 1.6*\r) {\scalebox{0.5}{$\diamond$}};

}

\end{scope}

\begin{scope}[xshift=5.25*\s cm] 

\tikzmath{
\r=0.4;
\g=6;
\ph=360/\g;
\x=\r*cos(\ph/2);
\y=\r*sin(\ph/2);
\rr=2*\y/sqrt(2);
\h=3;
\th=360/\h;
\xx=\rr*cos(\th/2);
\R=2*\r/cos(60);
\XX=sqrt((\r)^2+(\r)^2-2*(\r)*(\r)*cos(60));
\X=sqrt((2*\r)^2+(2*\r)^2-2*(2*\r)*(2*\r)*cos(60));
}

\filldraw[gray!40] (0,0) circle (2.75*\r);

\fill[white]
	(90:2*\r) -- (90+\ph:2*\r) -- (90+2*\ph:2*\r) -- (90+3*\ph:2*\r)-- (90+4*\ph:2*\r) -- (90+5*\ph:2*\r) -- cycle
;

\fill[gray!40]
	(90:\r) -- (90+\ph:\r) -- (90+2*\ph:\r) -- (90+3*\ph:\r)-- (90+4*\ph:\r) -- (90+5*\ph:\r) -- cycle
;

\draw[]
	(90-\ph:\r) -- (90+\ph:\r)
	(90-\ph:\r) -- (90+2*\ph:\r)
	(90-2*\ph:\r) -- (90+2*\ph:\r)
;

\arcThroughThreePoints[]{$(90-2*\ph:2*\r)$}{$(90-\ph:2.5*\r)$}{$(90:2*\r)$};
\arcThroughThreePoints[]{$(90+\ph:2*\r)$}{$(90+2*\ph:2.5*\r)$}{$(90+3*\ph:2*\r)$};

\foreach \a in {0,1} {

\draw[rotate=\a*180]
	(90:2*\r) -- ([shift={(90:2*\r)}]135:0.5*\r)
;

\draw[dashed, rotate=\a*180]
	([shift={(90:2*\r)}]135:0.5*\r) -- ([shift={(90:2*\r)}]135:1.25*\r)
;

}

\foreach \p in {0,...,5} {

\draw[rotate=\p*\ph]
	(90-1*\ph:\r) -- (90:\r)
	(90:\r) -- (90:2*\r)
	(90-1*\ph:2*\r) -- (90:2*\r)
;

\draw[line width=2, rotate=\p*\ph]
	(90:\r) -- (90:2*\r)
;

}

\foreach \a in {0,1} {

\node at (90+0.225*\ph+\a*180: 1.1*\r) {\scalebox{0.5}{$\diamond$}};
\node at (90-0.225*\ph+\a*180: 1.1*\r) {\scalebox{0.5}{$\diamond$}};

\node at (90-0.85*\ph+\a*180: 1.6*\r) {\scalebox{0.5}{$\diamond$}};
\node at (90-1.15*\ph+\a*180: 1.6*\r) {\scalebox{0.5}{$\diamond$}};

\node at (90+0.85*\ph+\a*180: 1.6*\r) {\scalebox{0.5}{$\diamond$}};
\node at (90+1.225*\ph+\a*180: 1.1*\r) {\scalebox{0.5}{$\diamond$}};

}

\end{scope} 

\begin{scope}[xshift=8.1*\s cm] 

\tikzmath{
\r=0.4;
\g=6;
\ph=360/\g;
\x=\r*cos(\ph/2);
\y=\r*sin(\ph/2);
\rr=2*\y/sqrt(2);
\h=3;
\th=360/\h;
\xx=\rr*cos(\th/2);
\R=2*\r/cos(60);
\XX=sqrt((\r)^2+(\r)^2-2*(\r)*(\r)*cos(60));
\X=sqrt((2*\r)^2+(2*\r)^2-2*(2*\r)*(2*\r)*cos(60));
}

\filldraw[gray!40] (0,0) circle (2.75*\r);

\fill[white]
	(90:2*\r) -- (90+\ph:2*\r) -- (90+2*\ph:2*\r) -- (90+3*\ph:2*\r)-- (90+4*\ph:2*\r) -- (90+5*\ph:2*\r) -- cycle
;

\fill[gray!40]
	(90:\r) -- (90+\ph:\r) -- (90+2*\ph:\r) -- (90+3*\ph:\r)-- (90+4*\ph:\r) -- (90+5*\ph:\r) -- cycle
;

\foreach \t in {0,...,2} {

\draw[rotate=\t*\th]
	(90-\ph:\r) -- (90+\ph:\r)
;

}

\arcThroughThreePoints[]{$(90-2*\ph:2*\r)$}{$(90-\ph:2.5*\r)$}{$(90:2*\r)$};
\arcThroughThreePoints[]{$(90:2*\r)$}{$(90+\ph:2.5*\r)$}{$(90+2*\ph:2*\r)$};
\arcThroughThreePoints[]{$(90+2*\ph:2*\r)$}{$(90-3*\ph:2.5*\r)$}{$(90+4*\ph:2*\r)$};

\foreach \p in {0,...,5} {

\draw[rotate=\p*\ph]
	(90-1*\ph:\r) -- (90:\r)
	(90:\r) -- (90:2*\r)
	(90-1*\ph:2*\r) -- (90:2*\r)
;

\draw[line width=2, rotate=\p*\ph]
	(90:\r) -- (90:2*\r)
;

}

\foreach \a in {0,1,2} {

\node at (90+0.225*\ph+\a*2*\ph: 1.1*\r) {\scalebox{0.5}{$\diamond$}};
\node at (90-0.225*\ph+\a*2*\ph: 1.1*\r) {\scalebox{0.5}{$\diamond$}};

\node at (90+1.15*\ph+\a*2*\ph: 1.6*\r) {\scalebox{0.5}{$\diamond$}};
\node at (90+0.85*\ph+\a*2*\ph: 1.6*\r) {\scalebox{0.5}{$\diamond$}};

}

\end{scope} 

\begin{scope}[xshift=11*\s cm] 

\tikzmath{
\r=0.25;
\g=6;
\ph=360/\g;
\x=\r*cos(\ph/2);
\y=\r*sin(\ph/2);
\rr=2*\y/sqrt(2);
\h=3;
\th=360/\h;
\xx=\rr*cos(\th/2);
\R=2*\r/cos(60);
}

\filldraw[gray!40] (0,0) circle (1.1*\R);

\filldraw[white] (0,0) circle (3*\r);

\filldraw[gray!40]
	(90:2*\r) -- (90+\ph:2*\r) --(90+2*\ph:2*\r) --(90+3*\ph:2*\r) -- (90+4*\ph:2*\r) -- (90+5*\ph:2*\r) -- cycle
;

\foreach \t in {0,...,2} {

\draw[rotate=\t*\th]
	(90-\ph:0.75*\r) -- (90+\ph:0.75*\r)
	(90-\ph:0.75*\r) -- (90-\ph:2*\r) 
	(90-\ph:0.75*\r) -- (90:2*\r)
	(90+\ph:0.75*\r) -- (90:2*\r)
;

\draw[rotate=\t*\th]
	(90:3*\r) -- (90:\R)
	(90:\R) to[out=200, in=80] (90+\ph:3*\r) 
	(90:\R) to[out=-20, in=110] (90-\ph:3*\r) 
	(90:\R) arc (90:90+\th:\R)
;

\node at (90+0.075*\th+\t*\th: 2.6*\r) {\scalebox{0.5}{$\diamond$}};
\node at (90-0.075*\th+\t*\th: 2.6*\r) {\scalebox{0.5}{$\diamond$}};

\node at (90+0.8*\ph+\t*\th: 2.1*\r) {\scalebox{0.5}{$\diamond$}};
\node at (90+1.2*\ph+\t*\th: 2.1*\r) {\scalebox{0.5}{$\diamond$}};

}

\foreach \p in {0,...,5} {

\draw[rotate=\p*\ph]
	(90-1*\ph:2*\r) -- (90:2*\r)
	(90:2*\r) -- (90:3*\r)
	(90:3*\r) arc (90:90+\ph:3*\r)
;

\draw[line width=2, rotate=\p*\ph]
	(90:2*\r) -- (90:3*\r)
;

}

\end{scope}

\end{tikzpicture}
\caption{The sporadic tilings, $\diamond=\beta$}
\label{Fig-a3-abab-Sporadic-Tilings}
\end{figure}
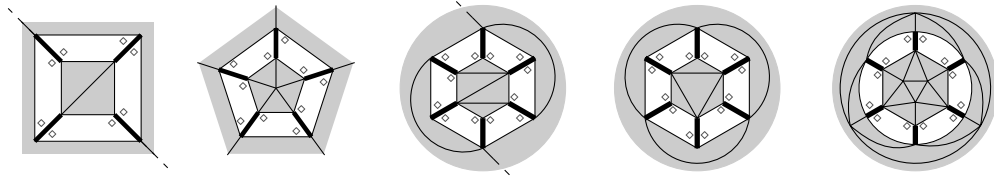

The paper is organised as follows. In Section \ref{Sec-basic}, we explain the basic terminologies and tools. In Section \ref{Sec-mgon-tilings}, we classify the tilings by regular $m$-gons ($m\ge4$) and the quadrilaterals. In Section \ref{Sec-tri-tilings}, we classify the tilings by regular triangles and the quadrilaterals. The tilings in Figure \ref{Fig-a3/am-abab-Tilings} are obtained in Propositions \ref{Prop-am-abab-albega}, \ref{Prop-a3-abab-albega}. The tilings in Figure \ref{Fig-a3-abab-Sporadic-Tilings} are obtained in Propositions \ref{Prop-a3-abab-albega}, \ref{Prop-a3-abab-albe2}, \ref{Prop-a3-abab-al2be2}. 

\section{Basics} \label{Sec-basic}

We denote a vertex by $\alpha^a\beta^b\gamma^c$, which consists of $a$ copies of $\alpha$ and $b$ copies of $\beta$ and $c$ copies of $\gamma$. The {\em vertex angle sum} of a vertex is given by
\begin{align}\label{Eq-vertex-angle-sum}
a \alpha + b \beta + c \gamma = 2\pi.
\end{align} 
In a vertex notation, $a,b,c$ are assumed to be $>0$ unless otherwise specified. That is, we only express the angles appearing at a vertex whenever possible. For example, $\alpha\beta^2$ is a vertex with $a=1$ and $b=2$ and $c=0$. The notation $\alpha\beta^2\cdots$ means a vertex with at least one $\alpha$ and two $\beta$'s, i.e., $a\ge1$ and $b\ge2$. The angle combination in $\cdots$ is called the {\em remainder} of the vertex. 

To obtain the vertices, it is convenient to have notations for studying various angle arrangements. For example, $\alpha_1\gamma_2\cdots$ denotes the vertex where $T_1$ contributes $\alpha$ and $T_2$ contributes $\gamma$ in the first picture of Figure \ref{Fig-a4-abab-adj-square-quad}. To emphasize $\alpha_1$ being adjacent to $\gamma_2$ along an $x$-edge \quotes{ $\vert$ }, we use $\alpha_1 \vert \gamma_2 \cdots$ to denote the vertex. We use \quotes{ $\bvert$ } to denote the $y$-edge. In addition, the same picture shows that $\alpha\vert\gamma$ is a vertex if and only if $\alpha\vert\beta\cdots$ is a vertex. Similarly, $T_1, T_2$ in the second picture show that $\gamma\vert\gamma\cdots$ is a vertex if and only if $\beta\vert\beta\cdots$ is also a vertex. 

\begin{figure}[h!] 
\centering
\begin{tikzpicture}

\tikzmath{
\s=1;
\r=0.8;
\th=360/4;
\x=\r*cos(0.5*\th);
\R=sqrt(\x^2+(3*\x)^2);
\aR=acos(3*\x/\R);
\ph=360/5;
\y=\r*sin(\ph/2);
\rr=2*\y/sqrt(2);
\xx=\rr*cos(\th/2);
\X=\r*cos(0.5*\ph);
}

\begin{scope}[] 

\foreach \a in {0,1,4} {

\draw[rotate=\a*\ph]
	(0.5*\ph:\r) -- (-0.5*\ph:\r)
;

}

\foreach \a in {0,...,3} {

\tikzset{shift={(\X+\xx,0)}}

\draw[rotate=\a*\th]
	(1.5*\th:\rr) -- (0.5*\th:\rr)
;

}

\draw[line width=2, shift={(\X+\xx,0)}]
	(1.5*\th:\rr) -- (0.5*\th:\rr)
	(2.5*\th:\rr) -- (3.5*\th:\rr)
;

\foreach \a in {0,1} {

\tikzset{shift={(\X+\xx,0)}}

\node at (1.55*\th+\a*2*\th:0.6*\rr) {\small $\beta$};
\node at (0.5*\th+\a*2*\th:0.6*\rr) {\small $\gamma$};

}

\foreach \a in {0,4} {

\node at (0.5*\ph+\a*\ph:0.75*\r) {\small $\alpha$};

}

\node at (160:0.45*\r) {$\vdots$};

\node[inner sep=1,draw,shape=circle] at (0.15*\r,0) {\small $1$};
\node[inner sep=1,draw,shape=circle] at (\X+\xx,0) {\small $2$};

\end{scope} 

\begin{scope}[xshift=4*\s cm] 

\foreach \aa in {-1,1} {

\tikzset{shift={(\aa*\x,0)}, xscale=\aa}

\foreach \a in {0,...,3} {

\draw[rotate=\th*\a]
	(0.5*\th:\r) -- (1.5*\th:\r)
;

}

\foreach \a in {0,2} {

\draw[line width=2]
	(90-0.5*\th:\r)  -- (90+0.5*\th:\r)
	(90+1.5*\th:\r)  -- (90+2.5*\th:\r)
;

\node at (1.5*\th+\th*\a: 0.6*\r) {\small $\beta$}; 
\node at (0.5*\th+\th*\a: 0.6*\r) {\small $\gamma$}; 

}

}

\node[inner sep=1,draw,shape=circle] at (-\x,0) {\small $1$};
\node[inner sep=1,draw,shape=circle] at (\x,0) {\small $2$};

\end{scope} 

\begin{scope}[xshift=7*\s cm, yshift=-0.1*\s cm] 

\foreach \a in {0,1,2} {

\draw[rotate=120*\a]
	(0:0) -- (90:\r) 
;

\node at (270+\a*120:0.3*\r) {\small $\alpha$};

}

\end{scope} 

\begin{scope}[xshift=9.5*\s cm, yshift=-0.1*\s cm] 

\foreach \a in {0,1,2} {

\draw[rotate=120*\a]
	(0:0) -- (90:\r) 
;

}

\draw[line width=2]
	(0:0) -- (90:\r) 
;

\node at (270:0.3*\r) {\small $\alpha$};

\node at (90-60:0.3*\r) {\small $\beta$};

\node at (90+60:0.3*\r) {\small $\gamma$};

\end{scope} 

\end{tikzpicture}
\caption{The arrangements of $\alpha \vert \gamma$ and $\gamma\vert\gamma$ and $\beta\vert\beta$ and $\alpha^3, \alpha\beta\gamma$}
\label{Fig-a4-abab-adj-square-quad}
\end{figure}
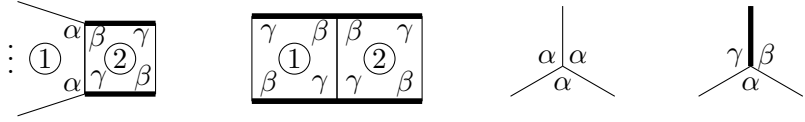

For a full vertex, such as $\alpha^3$ in the third picture of Figure \ref{Fig-a4-abab-adj-square-quad}, we use $\vert \alpha \vert \alpha \vert \alpha \vert$ to denote its angle arrangement. Similarly, we use $\vert \alpha \vert \beta \, \bvert \, \gamma \vert$ to denote the angle arrangement of $\alpha\beta\gamma$ in the fourth picture.

Up to symmetry, we may assume $\beta > \gamma$ in the quadrilateral. This assumption is implicit throughout this paper.

The prototiles in Figure \ref{Fig-a4-abab-angles} are regular $m$-gons ($m\ge3$) with angles $\alpha$ and the quadrilateral with angles $\beta,\gamma$. We have $m\alpha>(m-2)\pi$ and $2\beta+2\gamma>2\pi$. Combined with $\beta>\gamma$, we get
\begin{align}\label{Eq-am-abab-angle-ineq}
\alpha > (1 - \tfrac{2}{m})\pi, \quad 
\beta > \tfrac{1}{2}\pi, \quad
\beta+\gamma > \pi.
\end{align}
We have $\alpha>\frac{1}{3}\pi$ for $m=3$ and $\alpha>\frac{1}{2}\pi$ for $m\ge4$. Moreover, $\alpha^a=\alpha^3, \alpha^4, \alpha^5$ for $m=3$ and $\alpha^a=\alpha^3$ for $m=4,5$ and $\alpha^a$ is not a vertex for $m>6$.

The following lemma is an adaptation of \cite[Lemma 2]{cly}.

\begin{lem}[Parity Lemma] \label{Lem-Parity} The total number of $\beta,\gamma$ at each vertex is even.
\end{lem}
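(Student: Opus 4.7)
The plan is a standard edge-counting parity argument exploiting the $xyxy$ edge configuration of the quadrilateral, together with the fact that the regular $m$-gons contribute only $x$-edges. The key structural observation is how $x$- and $y$-edges flank each type of angle corner.

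First I would note that as one traverses the edges and corners around any vertex $v$, they alternate cyclically: edge, angle, edge, angle, $\dots$ Each angle at $v$ is bordered by exactly two edges, namely the two sides of the tile's corner at $v$. For an $\alpha$ corner, the tile is a regular $m$-gon with edge configuration $x^m$, so both bordering edges are $x$-edges. For a $\beta$ or $\gamma$ corner, the tile is the quadrilateral with edges $x,y,x,y$ in cyclic order, so at every corner of the quadrilateral exactly one bordering edge is an $x$-edge and the other is a $y$-edge.

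Now I would double-count the set of (angle, $y$-edge) incidence pairs at $v$, where a pair is formed by an angle at $v$ and a $y$-edge at $v$ that borders it. On the one hand, each $\beta$ or $\gamma$ corner at $v$ contributes exactly one such pair (its unique $y$-side), while each $\alpha$ corner contributes none. Hence the total number of pairs equals $b+c$, where $b$ and $c$ are the numbers of $\beta$ and $\gamma$ at $v$. On the other hand, each $y$-edge meeting $v$ has exactly two angles adjacent to it at $v$ (one on each side), so it contributes exactly $2$ pairs. Therefore
\begin{equation*}
b+c \;=\; 2\cdot\#\{y\text{-edges incident to }v\},
\end{equation*}
which is even.

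There is essentially no obstacle; the only thing to be careful about is the initial structural claim that every corner of the quadrilateral sits between one $x$-edge and one $y$-edge. This is immediate from the edge configuration $xyxy$ (alternating), so the argument is a clean parity count. The conclusion is the Parity Lemma.
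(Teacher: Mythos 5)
Your proof is correct and is essentially the paper's own argument: the paper's one-line proof states that the number of $\beta,\gamma$ at a vertex is twice the number of $y$-edges there, which is exactly the double-counting identity you derive in detail from the $xyxy$ edge configuration. You have simply made explicit the incidence count that the paper leaves implicit.
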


\begin{proof} The total number of $\beta,\gamma$ at a vertex is twice the number of $y$-edges at the vertex.
\end{proof}

The next lemma is an adaptation of \cite[Lemma 4]{cly} and an immediate consequence of Parity Lemma (Lemma \ref{Lem-Parity}).

\begin{lem}[Counting Lemma] \label{Lem-Counting} If at every vertex the number of $\beta$ is no more than the number of $\gamma$, then at every vertex these two numbers are equal. Moreover, $\beta^2\cdots$ is a vertex if and only if $\gamma^2\cdots$ is vertex.
\end{lem}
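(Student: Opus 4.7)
The plan is to exploit the fact that each quadrilateral contributes exactly two $\beta$'s and two $\gamma$'s. Let $f'$ denote the number of quadrilaterals in the tiling. Summing over all tiles, the total number of $\beta$-angles equals $2f'$ and likewise the total number of $\gamma$-angles equals $2f'$. Therefore
\[
\sum_{v} \#\beta(v) \;=\; \sum_{v} \#\gamma(v),
\]
where the sum runs over all vertices $v$ and $\#\beta(v),\#\gamma(v)$ denote the number of $\beta$'s, resp.\ $\gamma$'s, at $v$.

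For the first assertion, I would simply observe that if $\#\beta(v)\le\#\gamma(v)$ at every vertex, then the two termwise-bounded nonnegative sums above can only be equal when $\#\beta(v)=\#\gamma(v)$ at every $v$; this is the whole argument.

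For the \emph{moreover} part, the natural plan is contrapositive, using Parity Lemma to bridge the two directions. Suppose $\beta^{2}\cdots$ is a vertex but $\gamma^{2}\cdots$ is not; then $\#\gamma(v)\le 1$ at every vertex $v$. At a vertex where $\#\gamma(v)=1$, Parity Lemma (Lemma \ref{Lem-Parity}) forces $\#\beta(v)$ to be odd, hence $\#\beta(v)\ge 1=\#\gamma(v)$; at a vertex where $\#\gamma(v)=0$ the inequality $\#\beta(v)\ge\#\gamma(v)$ is automatic. Thus $\#\gamma(v)\le\#\beta(v)$ at every vertex, so the first part (applied with $\beta,\gamma$ interchanged) yields $\#\beta(v)=\#\gamma(v)$ everywhere. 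But the hypothesised vertex $\beta^{2}\cdots$ has $\#\beta\ge 2$, forcing $\#\gamma\ge 2$ there, which contradicts the assumption that no vertex is of the form $\gamma^{2}\cdots$. The reverse implication follows by the symmetric argument.

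The only mild subtlety is the appeal to Parity Lemma to upgrade the weak bound $\#\gamma(v)\le 1$ into the pointwise inequality $\#\gamma(v)\le\#\beta(v)$; once that is in hand the rest is pure bookkeeping, so I expect no real obstacle.
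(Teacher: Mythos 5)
Your proof is correct and is exactly the argument the paper intends: the paper gives no written proof, merely citing the counting argument of \cite[Lemma 4]{cly} (the global double count of $\beta$'s and $\gamma$'s over the quadrilaterals) together with the Parity Lemma, which is precisely your combination of termwise-bounded equal sums for the first assertion and the parity upgrade $\#\gamma(v)\le 1 \Rightarrow \#\gamma(v)\le\#\beta(v)$ for the \emph{moreover} part. No gaps.
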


The first picture of Figure \ref{Fig-a4-abab-adj-square-quad} shows that $\alpha\beta\cdots, \alpha\gamma\cdots$ are vertices. This gives the following lemma.

\begin{lem}\label{Lem-albe-alga} In a dihedral tiling by the regular polygons and the quadrilaterals, both $\alpha\beta\cdots, \alpha\gamma\cdots$ are vertices.
\end{lem}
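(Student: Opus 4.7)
The plan is to locate a shared edge between a regular polygon and a quadrilateral, and read off the two claimed vertex types from its endpoints. First I would record two structural facts about the prototiles: the regular polygon has only $x$-edges, while the quadrilateral's angles alternate $\beta,\gamma,\beta,\gamma$ around its boundary (with opposite angles equal). Consequently, any edge shared between a regular polygon and a quadrilateral is forced to be an $x$-edge, and along any $x$-edge of the quadrilateral the two endpoint angles are one $\beta$ and one $\gamma$ (adjacent angles of the quadrilateral always differ).

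Next I would establish that such a shared edge actually exists. Since the tiling is dihedral, both prototiles occur. They partition the sphere into two nonempty closed regions $R$ (the union of regular polygons) and $Q$ (the union of quadrilaterals) whose interiors are disjoint. Because the sphere is connected and $R\cup Q$ covers it, the common boundary $R\cap Q$ is nonempty; as a boundary of a union of spherical polygons it must contain a whole edge, which by construction is shared between a regular polygon and a quadrilateral, hence an $x$-edge.

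Finally, at one endpoint of this shared $x$-edge, the regular polygon contributes $\alpha$ and the quadrilateral contributes $\beta$, giving a vertex of the form $\alpha\beta\cdots$; at the other endpoint, the polygon contributes $\alpha$ and the quadrilateral contributes $\gamma$, giving $\alpha\gamma\cdots$. The only step requiring a sentence of justification is the existence of a polygon-quadrilateral adjacency, which is an easy consequence of the connectedness of the sphere; everything else is immediate from the edge and angle data of the two prototiles, essentially matching the first picture of Figure \ref{Fig-a4-abab-adj-square-quad}.
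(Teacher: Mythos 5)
Your argument is correct and is essentially the paper's own proof: the paper simply points to the first picture of Figure \ref{Fig-a4-abab-adj-square-quad}, where a regular polygon and a quadrilateral share an $x$-edge whose two endpoints exhibit $\alpha\beta\cdots$ and $\alpha\gamma\cdots$. Your only addition is to spell out, via connectedness of the sphere, why such a polygon--quadrilateral edge adjacency must exist, a point the paper leaves implicit.
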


By Parity Lemma, we also know $\alpha\beta\cdots=\alpha\beta^2\cdots, \alpha\beta\gamma\cdots$ and $\alpha\gamma\cdots=\alpha\beta\gamma\cdots, \alpha\gamma^2\cdots$. Then Lemma \ref{Lem-albe-alga} implies that one of $\alpha\beta^2\cdots, \alpha\beta\gamma\cdots$ is a vertex. By $\beta>\gamma$, it implies $\alpha+\beta+\gamma\le2\pi$. By $\beta+\gamma>\pi$, we then have $\alpha<\pi$. By $\beta>\gamma$, we then have $\gamma<\pi$.

By $\beta>\gamma$ and $\beta+\gamma > \pi$ and Parity Lemma, a vertex $\beta^2\cdots$ has no more $\beta,\gamma$ in the remainder. So we get
\begin{align}
\label{Eq-a3-abab-be2}
m = 3: \quad &\beta^2\cdots=\alpha\beta^2, \alpha^2\beta^2; \\
\label{Eq-am-abab-be2}
m \ge 4: \quad &\beta^2\cdots=\alpha\beta^2.
\end{align}
From the above, we know $\beta^2\cdots$ is $\alpha\beta^2=$ $\bvert \, \beta \vert \alpha \vert \beta \, \bvert$ or $ \alpha^2\beta^2=$ $\bvert \, \beta \vert \alpha \vert \alpha \vert \beta \, \bvert$. Then $\beta \vert \beta \cdots$ is not a vertex. By the second picture of Figure \ref{Fig-a4-abab-adj-square-quad}, this further implies that $\gamma\vert\gamma\cdots$ is not a vertex. By no $\gamma\vert\gamma\cdots$, we know that $\gamma^c, \beta\gamma^{c}$ are not vertices and $\alpha\gamma^c=\alpha\gamma^2$. By $\beta>\gamma$, the vertex $\alpha\gamma^2$ contradicts $\alpha+\beta+\gamma\le2\pi$. Hence, by \eqref{Eq-a3-abab-be2}, \eqref{Eq-am-abab-be2}, we have 
\begin{align} \label{Eq-a3/am-abab-ga2}
\gamma^2\cdots=\alpha^{a\ge2}\gamma^c, \alpha^a\beta\gamma^c.
\end{align}

The $y$-edges divide a vertex into a combination of $\bvert \, \beta \vert \cdots \vert \beta \, \bvert$, $\bvert \, \beta \vert \cdots \vert \gamma \, \bvert$ and $\bvert \, \gamma \vert \cdots \vert \gamma \, \bvert$, where $\cdots$ is empty or filled by $\alpha$'s. By no $\gamma\vert\gamma\cdots$, we know $\bvert \, \gamma \vert \cdots \vert \gamma \, \bvert$ has at least one $\alpha$. This implies $2a \ge c-1$ in $\alpha^a\beta\gamma^c$ and $2a \ge c$ in $\alpha^a\gamma^c$.


By $\beta^2\cdots=\alpha\beta^2, \alpha^2\beta^2$ and no $\beta\gamma^c$, we get $\beta\gamma\cdots=\alpha^a\beta\gamma^c$. By $\beta+\gamma>\pi$ and the first inequality in \eqref{Eq-am-abab-angle-ineq} and $2a \ge c$ in $\alpha^a\beta\gamma^c$, we have $a\le2$ for $m=3$ and $a=1$ for $m\ge4$. By Parity Lemma, we get
\begin{align}
\label{Eq-a3-abab-bega}
m = 3: \quad &\beta\gamma\cdots=\alpha\beta\gamma, \alpha^2\beta\gamma, \alpha\beta\gamma^3, \alpha^2\beta\gamma^3, \alpha^2\beta\gamma^5; \\
\label{Eq-am-abab-bega}
m \ge 4: \quad &\beta\gamma\cdots=\alpha\beta\gamma, \alpha\beta\gamma^3.
\end{align}

We recall a well-known fact below.

\begin{lem}\label{Lem-deg345} In a tiling of the sphere by polygons, there is a degree $3, 4$ or $5$ vertex. Moreover, if there is no triangle, then there is a degree $3$ vertex.
\end{lem}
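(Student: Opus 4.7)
The plan is to use Euler's formula $V - E + F = 2$ for a tiling of the sphere, combined with edge-counting identities coming from degrees of vertices and numbers of sides of faces.

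First I would set up the two identities $2E = \sum_v \deg(v)$ (each edge contributes to two vertex-degrees) and $2E = \sum_f s(f)$, where $s(f)$ is the number of sides of the face $f$. Since we assume every vertex has degree $\geq 3$ and every face is a polygon with at least $3$ sides, these give $2E \geq 3V$ and $2E \geq 3F$, i.e., $V \leq \tfrac{2E}{3}$ and $F \leq \tfrac{2E}{3}$.

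For the first claim, I argue by contradiction. Suppose every vertex has degree $\geq 6$. Then $2E \geq 6V$, so $V \leq \tfrac{E}{3}$. Plugging into Euler's formula,
\begin{equation*}
2 = V - E + F \leq \tfrac{E}{3} - E + \tfrac{2E}{3} = 0,
\end{equation*}
a contradiction. So some vertex has degree $3$, $4$, or $5$.

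For the second claim, assume there is no triangular face, so every face has at least $4$ sides, giving $F \leq \tfrac{E}{2}$. Suppose for contradiction that every vertex has degree $\geq 4$; then $V \leq \tfrac{E}{2}$, and
\begin{equation*}
2 = V - E + F \leq \tfrac{E}{2} - E + \tfrac{E}{2} = 0,
\end{equation*}
again a contradiction. So there is a vertex of degree $3$.

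There is no real obstacle here; both parts are clean applications of Euler's formula with the standard double-counting bounds, and no geometric input about the particular prototiles is needed.
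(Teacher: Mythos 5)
Your proof is correct. The paper states this lemma as a well-known fact and gives no proof of its own; the Euler-formula double-counting argument you supply is exactly the standard one, and both parts of your computation check out.
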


By \eqref{Eq-am-abab-angle-ineq} and Lemma \ref{Lem-deg345} and Parity Lemma and no $\gamma^c, \alpha\gamma^c, \beta\gamma^c$, one of the following is a vertex in a dihedral tiling
\begin{align}
\label{Eq-List-a3-abab-deg345}
&m=3:& &\alpha^3,  \alpha\beta^2, \alpha\beta\gamma, \alpha^4, \alpha^2\beta^2, \alpha^2\gamma^2,  \alpha^2\beta\gamma, \alpha^5,  \alpha^3\gamma^2, \alpha\beta\gamma^3; \\
\label{Eq-List-a4-a5-abab-deg3}
&m=4,5:& &\alpha^3, \alpha\beta^2, \alpha\beta\gamma; \\
\label{Eq-List-a6+-abab-deg3}
&m \ge 6:& &\alpha\beta^2, \alpha\beta\gamma.
\end{align}

\section{Tilings with $m$-gons with $m\ge4$} \label{Sec-mgon-tilings}



By \eqref{Eq-am-abab-be2}, we have $\beta^2\cdots=\alpha\beta^2$. We divide the discussion according to whether $\alpha\beta^2$ is a vertex. We also recall $\alpha^a=\alpha^3$.

\begin{prop}\label{Prop-am-abab-albega} The dihedral tiling without $\alpha\beta^2$ is the second picture of Figure \ref{Fig-am-abab-AAD-Tiling-al3-albega}.
\end{prop}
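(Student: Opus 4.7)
First I would determine the vertex types under the hypothesis. Since $\alpha\beta^2$ is not a vertex, \eqref{Eq-am-abab-be2} removes every $\beta^2\cdots$ vertex, and the Counting Lemma (Lemma~\ref{Lem-Counting}) then removes every $\gamma^2\cdots$ vertex as well. Lemma~\ref{Lem-albe-alga} forces $\alpha\beta\cdots$ to be a vertex, and \eqref{Eq-am-abab-bega} together with the absence of $\gamma^2\cdots$ (which kills $\alpha\beta\gamma^3$) makes this vertex $\alpha\beta\gamma$. Combined with $\alpha^a = \alpha^3$ and the Parity Lemma, every vertex must then be $\alpha^3$ or $\alpha\beta\gamma$, and in particular of degree~$3$.

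Next I would analyse a fixed regular $m$-gon $P$. At each vertex $v$ of $P$, the two tiles other than $P$ are adjacent to $P$ across the two edges of $P$ meeting at $v$. If $v=\alpha^3$ both of these tiles contribute $\alpha$ and are hence $m$-gons; if $v=\alpha\beta\gamma$ they contribute $\beta$ and $\gamma$ and are hence both quadrilaterals. Consecutive edges of $P$ therefore share neighbours of a common prototype, so all $m$ neighbours of $P$ are of a single prototype: either all $m$-gons or all quadrilaterals. If $P$ were surrounded only by $m$-gons, then each neighbour $P'$ would have the $m$-gon $P$ amongst its own neighbours and hence could not be surrounded entirely by quadrilaterals, so $P'$ would itself be surrounded by $m$-gons; iterating through the connected sphere tiling would force every tile to be an $m$-gon, contradicting the dihedral hypothesis. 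Hence every $m$-gon is surrounded by quadrilaterals, no $\alpha^3$ vertex occurs, and every vertex of the tiling is $\alpha\beta\gamma$.

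To finish I would count. Writing $p,q$ for the numbers of $m$-gons and quadrilaterals and $V$ for the number of vertices, counting $\alpha$'s at the (degree-$3$) vertices gives $V=pm$, counting $\beta$'s gives $V=2q$, and Euler's formula $V-E+F=2$ with $2E=3V$ then forces $p=2$ and $q=m$. Since each $m$-gon has $m$ quadrilateral neighbours drawn from a total of only $m$ quadrilaterals, the same $m$ quadrilaterals surround both $m$-gons; each quadrilateral therefore has its two $x$-edges on the two $m$-gons and its two $y$-edges on neighbouring quadrilaterals, which is precisely the prism-type tiling in the second picture of Figure~\ref{Fig-am-abab-AAD-Tiling-al3-albega}. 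The main obstacle is the rigidity step in the previous paragraph: without the observation that the neighbours of any $m$-gon form a single prototype, one would have to confront mixed tilings containing both $\alpha^3$ and $\alpha\beta\gamma$ vertices, but this dichotomy together with connectedness of the sphere tiling rules them out.
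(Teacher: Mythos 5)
Your proposal is correct, and its first half coincides with the paper's argument: you pin the vertices down to $\alpha^3$ and $\alpha\beta\gamma$ in the same way, and your propagation showing that one $\alpha^3$ forces a monohedral tiling is the paper's step in different clothing (you propagate the type of the neighbours of a tile, the paper propagates the vertex $\alpha^3$ itself; both rest on connectedness and the dihedral hypothesis, and your version has the small advantage of not needing to invoke the cube and dodecahedron). Where you genuinely diverge is the endgame. The paper grows the tiling directly from a single $\alpha\beta\gamma$ vertex ($T_1,T_2,T_3$, then $\beta_3\gamma_2\cdots=\alpha\beta\gamma$, and so on), which yields existence and uniqueness simultaneously. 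You instead extract the global data first, using Euler's formula with $2E=3V$ and the counts $V=pm=2q$ to force $p=2$, $q=m$, and then assemble the prism. This buys the tile numbers for free and makes the uniqueness claim more transparent, but it leaves a little combinatorial assembly implicit: you should justify that the $m$ quadrilateral neighbours of an $m$-gon are pairwise distinct (no quadrilateral meets the same $m$-gon along both of its $x$-edges) and that the $y$-edge adjacencies of the $m$ quadrilaterals close up into a single $m$-cycle rather than several components; both follow quickly from the angle arrangement of $\alpha\beta\gamma$ at each vertex, and the paper's tile-by-tile propagation sidesteps the issue entirely. Either route is acceptable.
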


The tiling is given by the cube and has $2$ regular polygons and $4$ quadrilaterals.

\begin{proof} By \eqref{Eq-am-abab-be2} and no $\alpha\beta^2$, we know that $\beta^2\cdots$ is not a vertex. Then by Counting Lemma, $\gamma^2\cdots$ is also not a vertex. Parity Lemma further implies $\beta\cdots=\gamma\cdots=\beta\gamma\cdots$ with no $\beta,\gamma$ in the remainder. By \eqref{Eq-am-abab-bega}, we then know $\beta\cdots=\gamma\cdots=\alpha\beta\gamma$ is a vertex. So we have $\beta\cdots=\gamma\cdots=\beta\gamma\cdots=\alpha\beta\gamma$. The other vertices consist of only $\alpha$'s. By $\alpha^a=\alpha^3$, the vertices are 
\begin{align*}
\alpha^3, \alpha\beta\gamma.
\end{align*}

From the above, we know $\alpha^2\cdots=\alpha^3$. Starting at an $\alpha^3$, namely $\alpha_1\alpha_2\alpha_3$ in the first picture of Figure \ref{Fig-am-abab-AAD-Tiling-al3-albega}, we also determine its two adjacent vertices to be $\alpha^3$'s. Repeating the same argument, we always get the vertex $\alpha^3$. Then the tiling is a monohedral tiling and therefore $m=4,5$ and the monohedral tilings are the cube and the dodecahedron. Hence $\alpha^3$ is not a vertex for dihedral tiling.

\begin{figure}[h!] 
\centering
\begin{tikzpicture}

\tikzmath{
\s=1;
\r=0.8;
\g=6;
\ph=360/\g;
\x=\r*cos(\ph/2);
\y=\r*sin(\ph/2);
\rr=2*\y/sqrt(2);
\h=4;
\th=360/\h;
\xx=\rr*cos(\th/2);
\ps=360/7;
}

\begin{scope}[yshift=1*\s cm] 

\foreach \p in {2,3,4,5} {

\draw[rotate=\p*\ph]
	(90-1*\ph:\r) -- (90:\r)
;

}

\foreach \p in {-1,0,1} {

\draw[rotate=\p*\ph]
	(270:\r) -- (270:2*\r)
;

}

\foreach \p in {-1,0,1} {

\node at (282.5+\p*\ph:1.15*\r) {\small $\alpha$};
\node at (257.5+\p*\ph:1.15*\r) {\small $\alpha$};

}

\node at (270:0.75*\r) {\small $\alpha$};
\node at (270-\ph:0.7*\r) {\small $\alpha$};
\node at (270+\ph:0.7*\r) {\small $\alpha$};

\node[inner sep=1,draw,shape=circle] at (-0.8*\r,-1.5*\r) {\small $1$};
\node[inner sep=1,draw,shape=circle] at (0.8*\r,-1.5*\r) {\small $2$};
\node[inner sep=1,draw,shape=circle] at (0,0) {\small $3$};

\end{scope}

\begin{scope}[xshift=4*\s cm]

\foreach \p in {-1,0,1,2} {

\draw[rotate=\p*\ph]
	(90-1*\ph:\r) -- (90:\r)
;

}

\foreach \p in {-1,0,1} {

\draw[line width=1.75, rotate=\p*\ph]
	(90:\r) -- (90:2*\r)
;

\node at (78.5+\p*\ph:1.15*\r) {\small $\beta$};
\node at (102.5+\p*\ph:1.15*\r) {\small $\gamma$};

}

\node at (90:0.75*\r) {\small $\alpha$};
\node at (90-\ph:0.7*\r) {\small $\alpha$};
\node at (90+\ph:0.7*\r) {\small $\alpha$};

\foreach \aa in {-1, 1} {

\draw[xscale=\aa]
	(90:2*\r) -- (90-1*\ph:2*\r) 
;

}

\node at (85:1.65*\r) {\small $\gamma$};
\node at (97.5:1.65*\r) {\small $\beta$};

\node at (90-0.85*\ph:1.6*\r) {\small $\beta$};
\node at (90+0.85*\ph:1.6*\r) {\small $\gamma$};

\node at (90:2.25*\r) {\small $\alpha$};

\node at (90-1.15*\ph:1.7*\r) {\small $\gamma$};
\node at (90+1.175*\ph:1.7*\r) {\small $\beta$};

\node[inner sep=1,draw,shape=circle] at (0,0) {\small $1$};
\node[inner sep=1,draw,shape=circle] at (0.65*\r,1.2*\r) {\small $2$};
\node[inner sep=1,draw,shape=circle] at (-0.65*\r,1.2*\r) {\small $3$};

\end{scope}

\begin{scope}[xshift=8.5*\s cm] 

\foreach \a in {0,1,2,5,6} {

\draw[line width=1.75, rotate=\a*\ps]
	(90:\r) -- (90:2*\r)
;

\node at (90+\a*\ps:0.8*\r) {\small $\alpha$};

\node at (90-0.25*\ps+\a*\ps:1.2*\r) {\small $\beta$};
\node at (90+0.2*\ps+\a*\ps:1.2*\r) {\small $\gamma$};

\node at (90-0.15*\ps+\a*\ps:1.75*\r) {\small $\gamma$};
\node at (90+0.2*\ps+\a*\ps:1.75*\r) {\small $\beta$};

\node at (90+\a*\ps:2.2*\r) {\small $\alpha$};

}

\draw (0,0) circle (\r);

\draw (0,0) circle (2*\r);

\node at (270:1.5*\r) {\small $\cdots$};

\end{scope} 

\end{tikzpicture}
\caption{The tiling with $\alpha^3$ and the tiling with $\alpha\beta\gamma$}
\label{Fig-am-abab-AAD-Tiling-al3-albega}
\end{figure}

Now $\alpha\beta\gamma$ is the only vertex. It determines tiles $T_1, T_2, T_3$ in the second picture of Figure \ref{Fig-am-abab-AAD-Tiling-al3-albega}. Then we have $\beta_3\gamma_2\cdots=\alpha\beta\gamma$. Repeating the process, we determine a dihedral tiling for each $m\ge4$. 
\end{proof}


\begin{prop}\label{Prop-am-abab-albe2} There is no dihedral tiling with $\alpha\beta^2$.
\end{prop}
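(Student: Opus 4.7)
The plan is to show that the hypothesis forces a period-$3$ pattern of vertex types around every regular $m$-gon, which cannot close up when $m \in \{4,5\}$, while for $m \geq 6$ the required neighbouring vertex type already fails to exist. I will first convert $\alpha\beta^2$ into $\alpha + 2\beta = 2\pi$, equivalently $\beta = \pi - \alpha/2$. Two of the three candidate $\beta\gamma$-containing vertices in \eqref{Eq-am-abab-bega} can then be ruled out immediately: $\alpha\beta\gamma$ forces $\gamma = \beta$, contradicting $\beta > \gamma$; and $\alpha\beta\gamma^3$ forces $\beta = 3\gamma$, but combined with $\beta + \gamma > \pi$ and $\beta < 3\pi/4$ (from $\alpha > \pi/2$) this is inconsistent.

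I would next examine the neighbour $V_2$ of a vertex $V_1 = \alpha\beta^2$ along the boundary of a regular $m$-gon $P$. The arrangement $\bvert \beta | \alpha | \beta \bvert$ at $V_1$ shows that the quadrilateral $Q$ sharing the $x$-edge $V_1V_2$ with $P$ contributes $\beta$ at $V_1$ and therefore $\gamma$ at $V_2$; hence $V_2 = \alpha\gamma\cdots$. By Parity Lemma, the elimination above, the non-existence of $\alpha\gamma^2$, and \eqref{Eq-a3/am-abab-ga2}, the only remaining option is $V_2 = \alpha^a\gamma^c$ with $a \geq 2$, $c \geq 2$, $c$ even. A short case analysis against $\beta + \gamma > \pi$ and $\alpha > (1 - 2/m)\pi$ leaves only $(a, c) = (2, 2)$ with $\gamma = \pi - \alpha$, and this requires $\alpha < 2\pi/3$; hence $m \in \{4, 5\}$, while $m \geq 6$ is immediately ruled out.

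For $m \in \{4, 5\}$ the only edge-consistent cyclic arrangement at $\alpha^2\gamma^2$ is $|\alpha|\alpha|\gamma \bvert \gamma|$, because any alternating order $\alpha\gamma\alpha\gamma$ would place a $\gamma$ between two $x$-edges, contradicting the quadrilateral's edge pattern at $\gamma$. I then propagate AAD around $P$ step by step: at $V_2$, the edge $V_2V_3$ is the $x$-edge between $P$'s $\alpha$ and an adjacent $m$-gon $P'$, making $V_3 = \alpha^2\gamma^2$; at $V_3$, the edge $V_3V_4$ is the $x$-edge between $P$'s $\alpha$ and a quadrilateral contributing $\gamma$ at $V_3$ and hence $\beta$ at $V_4$, making $V_4 = \alpha\beta^2$. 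The sequence of vertex types around $P$ is therefore periodic of period $3$, namely $(\alpha\beta^2, \alpha^2\gamma^2, \alpha^2\gamma^2)$, so closure demands $3 \mid m$, which fails for both $m = 4$ and $m = 5$. The main obstacle is the $(a, c)$ case analysis for $V_2$ and then tracking the edge types carefully through each AAD step of the propagation; the final modular contradiction is immediate.
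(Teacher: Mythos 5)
Your proposal is correct. The first half coincides with the paper's argument in substance: you rule out $\alpha\beta\gamma$ and $\alpha\beta\gamma^3$, conclude that $\beta\gamma\cdots$ is not a vertex, force $\gamma\cdots=\alpha^{a\ge2}\gamma^c$ (the paper gets existence of $\gamma^2\cdots$ from the Counting Lemma, you get it from the neighbour of $\alpha\beta^2$ along an $x$-edge, which is exactly the mechanism behind Lemma \ref{Lem-albe-alga}), pin down $(a,c)=(2,2)$, and deduce $\alpha<\frac{2}{3}\pi$, hence $m=4,5$ with the unique arrangements $\alpha\beta^2=\bvert\,\beta\vert\alpha\vert\beta\,\bvert$ and $\alpha^2\gamma^2=\bvert\,\gamma\vert\alpha\vert\alpha\vert\gamma\,\bvert$. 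Where you genuinely diverge is the endgame: the paper runs two separate tile-by-tile constructions, getting adjacent $\beta$'s in a quadrilateral for $m=4$ and a forbidden $\bvert\,\gamma\vert\alpha\vert\gamma\,\bvert$ arrangement for $m=5$, whereas you propagate the forced vertex types around the boundary of a single $m$-gon and observe the period-$3$ pattern $\alpha\beta^2,\alpha^2\gamma^2,\alpha^2\gamma^2$, so that closure requires $3\mid m$ and both $m=4$ and $m=5$ die at once. Your version is more uniform and avoids drawing any configuration; its cost is that each propagation step must be checked to be forced, and here there is one small omission: when you assert $V_3=\alpha^2\gamma^2$ from $V_3=\alpha\vert\alpha\cdots$, you must also exclude $\alpha^3$ (immediate from $\alpha<\frac{2}{3}\pi$, which you already have) and $\alpha^2\beta^2$ (excluded by \eqref{Eq-am-abab-be2} for $m\ge4$); the paper makes the corresponding exclusion of $\alpha^a=\alpha^3$ explicit via $\beta>\alpha$. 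With that one line added, your argument is complete and is a legitimate alternative to the paper's case analysis.
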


\begin{proof} By $\beta+\gamma>\pi$ and $\alpha\beta^2$, we get $2\gamma>\alpha$. Then we have $\alpha+\beta+3\gamma>2\pi$. By $\beta>\gamma$ and $\alpha\beta^2$, we know that $\alpha\beta\gamma$ is not a vertex. Combined with \eqref{Eq-am-abab-bega} and $\alpha+\beta+3\gamma>2\pi$, we know that $\beta\gamma\cdots$ is not a vertex.  

By $\alpha\beta^2$ and Counting Lemma, we know that $\gamma^2\cdots$ is a vertex. By \eqref{Eq-a3/am-abab-ga2}, we have $\gamma^2\cdots=\alpha^{a\ge2}\gamma^c, \alpha^a\beta\gamma^c$. By Parity Lemma and no $\beta\gamma\cdots$, we further know that $\gamma\cdots=\gamma^2\cdots=\alpha^{a\ge2}\gamma^{c}$ is a vertex. Then by $2\gamma>\alpha$ and Parity Lemma, we have $2\pi \ge 2\alpha+2\gamma>3\alpha$, which implies $\alpha<\frac{2}{3}\pi$. By $\alpha\beta^2$, we get $\beta>\frac{2}{3}\pi>\alpha$.

By $2\gamma> \alpha > \frac{1}{2}\pi$ and Parity Lemma, we conclude $\gamma\cdots=\alpha^{a\ge2}\gamma^{c}=\alpha^{2}\gamma^2$. By $\beta > \alpha$ and \eqref{Eq-am-abab-be2} and Parity Lemma and no $\beta\gamma\cdots$, we get $\beta\cdots=\beta^2\cdots=\alpha\beta^2$. Meanwhile, by $\beta>\alpha$ and $\alpha\beta^2$, we know that $\alpha^a=\alpha^3$ is not a vertex. Hence we get
\begin{align*}
\alpha\beta^2, \alpha^2\gamma^2.
\end{align*}
From the above, we have $\alpha^2\cdots=\alpha^2\gamma^2$ with a unique angle arrangement $\bvert \, \gamma \vert \alpha \vert \alpha \vert \gamma \, \bvert$. 

By $\beta>\alpha$ and $\alpha\beta^2$, we have $\frac{2}{3}\pi>\alpha$. By $m\ge4$ and $\alpha>(1 - \frac{2}{m})\pi$, we get $m=4,5$.

For $m=4$, the arrangement of $\alpha\vert\alpha$ determines tiles $T_1, T_2$ in the first picture of Figure \ref{Fig-a4/a5-abab-AAD-albe2-al2ga2}. Then $\alpha^2\cdots=\alpha^2\gamma^2$ determines $T_3, T_4$. Then $\alpha_2\beta_3\cdots, \alpha_2\beta_4\cdots=\alpha\beta^2$ give two adjacent $\beta$'s in $T_5$, a contradiction. So $\alpha\vert\alpha\cdots$ is not a vertex. This further implies $\alpha^2\gamma^2$ is not a vertex, contradicting Lemma \ref{Lem-albe-alga}.

\begin{figure}[h!] 
\centering
\begin{tikzpicture}

\tikzmath{
\s=1;
}

\begin{scope}[]

\tikzmath{
\r=0.8;
\th=360/4;
\x=\r*cos(0.5*\th);
\R = sqrt(\x^2+(3*\x)^2);
\aR = acos(3*\x/\R);
}

\foreach \aa in {-1,1} {

\tikzset{shift={(\aa*\x,0)}}

\foreach \a in {0,...,3} {

\draw[rotate=\th*\a]
	(0.5*\th:\r) -- (1.5*\th:\r)
;

\node at (0.5*\th+\th*\a: 0.65*\r) {\small $\alpha$}; 

}

}

\foreach \aa in {-1,1} {

\tikzset{shift={(\x,\aa*2*\x)}, yscale=\aa}

\foreach \a in {0,...,3} {

\draw[rotate=\th*\a]
	(0.5*\th:\r) -- (1.5*\th:\r)
;

}

\foreach \a in {0,1} {

\node at (1.5*\th+\a*2*\th: 0.65*\r) {\small $\beta$}; 
\node at (0.5*\th+\a*2*\th: 0.65*\r) {\small $\gamma$}; 

}

}

\node at (-0.35*\x, 1.35*\x) {\small $\gamma$}; 
\node at (-0.35*\x, -1.35*\x) {\small $\gamma$}; 

\draw[line width=2]
	(0,\x) -- (0,3*\x)
	(0,-\x) -- (0,-3*\x)
	(2*\x,\x) -- (2*\x,3*\x)
	(2*\x,-\x) -- (2*\x,-3*\x)
;

\node at (2.25*\x, \x) {\small $\beta$};
\node at (2.25*\x, -\x) {\small $\beta$};

\node[inner sep=1,draw,shape=circle] at (-\x,0) {\small $1$};
\node[inner sep=1,draw,shape=circle] at (\x,0) {\small $2$};
\node[inner sep=1,draw,shape=circle] at (\x,2*\x) {\small $3$};
\node[inner sep=1,draw,shape=circle] at (\x,-2*\x) {\small $4$};
\node[inner sep=1,draw,shape=circle] at (3*\x,0) {\small $5$};

\end{scope}

\begin{scope}[xshift=5*\s cm] 

\tikzmath{
\r=0.8;
\g=5;
\ph=360/\g;
\x=\r*cos(\ph/2);
\y=\r*sin(\ph/2);
\rr=2*\y/sqrt(2);
\h=4;
\th=360/\h;
\xx=\rr*cos(\th/2);
}

\foreach \aa in {-1,1} {

\tikzset{shift={(\aa*\x,0)}, xscale=-\aa}

\foreach \a in {0,...,4} {

\draw[rotate=\a*\ph]
	(0.5*\ph:\r) -- (-0.5*\ph:\r)
;

\node at (0.5*\ph+\a*\ph:0.7*\r) {\small $\alpha$};

}

}

\foreach \a in {-1,1} {

\tikzset{shift={(\x,0)}}

\draw[line width=2, rotate=\a*\ph]
	(0:\r) -- (0:1.5*\r)
;

}

\foreach \a in {0,1} {

\draw[line width=2, rotate=\a*2*\th]
	(0,\y) -- (0, 2*\y)
;

}

\draw[line width=2, shift={(\x,0)}]
	(0:\r) -- (10:1.5*\r)
	(0:\r) -- (-10:1.5*\r)
;

\node at (-0.2*\r, 1.5*\y) {\small $\gamma$};
\node at (-0.2*\r, -1.5*\y) {\small $\gamma$};

\node at (0.2*\r, 1.5*\y) {\small $\gamma$};
\node at (0.95*\r, 2*\y) {\small $\beta$};

\node at (1.4*\r, 1.8*\y) {\small $\beta$};
\node at (2.3*\x, 0.5*\y) {\small $\gamma$};

\node at (1.35*\r, -1.75*\y) {\small $\beta$};
\node at (2.3*\x, -0.5*\y) {\small $\gamma$};

\node at (0.2*\r, -1.5*\y) {\small $\gamma$};
\node at (0.95*\r, -2.1*\y) {\small $\beta$};

\node[inner sep=1,draw,shape=circle] at (-\x,0) {\small $1$};
\node[inner sep=1,draw,shape=circle] at (\x,0) {\small $2$};
\node[inner sep=1,draw,shape=circle] at (0.65*\x,1.75*\x) {\small $3$};
\node[inner sep=1,draw,shape=circle] at (0.65*\x,-1.75*\x) {\small $4$};
\node[inner sep=1,draw,shape=circle] at (2.5*\x,1*\x) {\small $5$};
\node[inner sep=1,draw,shape=circle] at (2.5*\x,-1*\x) {\small $6$};

\end{scope}

\end{tikzpicture}
\caption{The arrangements of $\alpha\vert\alpha$}
\label{Fig-a4/a5-abab-AAD-albe2-al2ga2}
\end{figure}
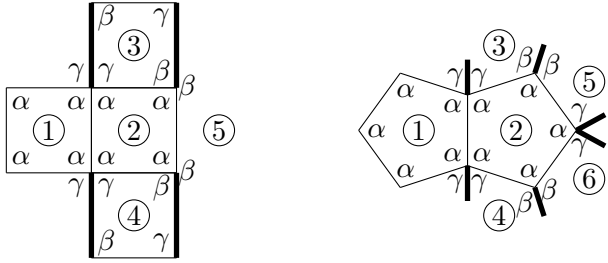

For $m=5$, the arrangement of $\alpha\vert\alpha$ determines tiles $T_1, T_2$ in the second picture of Figure \ref{Fig-a4/a5-abab-AAD-albe2-al2ga2}. Then $\alpha^2\cdots=\alpha^2\gamma^2$ determines $T_3, T_4$. Then $\alpha_2\beta_3\cdots, \alpha_2\beta_4\cdots=\alpha\beta^2$ determine $T_5, T_6$ respectively. It implies $\bvert \, \gamma_5 \vert \alpha_2 \vert \gamma_6 \, \bvert \cdots$ $= \alpha^2\gamma^2$, contradicting $\alpha^2\gamma^2=$ $\bvert \, \gamma \vert \alpha \vert \alpha \vert \gamma \, \bvert$. 
\end{proof}

\section{Tilings with Triangles} \label{Sec-tri-tilings}

By \eqref{Eq-a3-abab-be2}, we have $\beta^2\cdots=\alpha\beta^2,\alpha^2\beta^2$. We divide the discussion according to whether one of $\alpha\beta^2, \alpha^2\beta^2$ is a vertex. We also recall $\alpha^a=\alpha^3, \alpha^4, \alpha^5$.

\begin{prop}\label{Prop-a3-abab-albega} The dihedral tilings without $\alpha\beta^2, \alpha^2\beta^2$ is the first tiling in Figure \ref{Fig-a3/am-abab-Tilings} and the second tiling in Figure \ref{Fig-a3-abab-Sporadic-Tilings}.
\end{prop}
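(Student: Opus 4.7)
The plan. First I would pin down the full vertex list. Since neither $\alpha\beta^2$ nor $\alpha^2\beta^2$ is a vertex, \eqref{Eq-a3-abab-be2} forces $\beta^2\cdots$ to be absent; by Counting Lemma, $\gamma^2\cdots$ is also absent; then Parity Lemma gives $\beta\cdots=\gamma\cdots=\beta\gamma\cdots$. Intersecting \eqref{Eq-a3-abab-bega} with the prohibition on $\gamma^2\cdots$ leaves $\beta\gamma\cdots\in\{\alpha\beta\gamma,\alpha^2\beta\gamma\}$. By Lemma \ref{Lem-albe-alga}, at least one of these is a vertex; both cannot be, since their vertex-angle-sum equations together force $\alpha=0$. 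So exactly one of $\alpha\beta\gamma,\alpha^2\beta\gamma$ occurs, and every other vertex type is $\alpha^a$ with $a\in\{3,4,5\}$.

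Next I would split on which $\beta\gamma\cdots$ vertex occurs. If it is $\alpha^2\beta\gamma$, then $2\alpha+\beta+\gamma=2\pi$ combined with $\alpha^3$ or $\alpha^4$ forces $\beta+\gamma\le\pi$, contradicting \eqref{Eq-am-abab-angle-ineq}, so only $\alpha^5$ can accompany it. If it is $\alpha\beta\gamma$, each of $\alpha^3,\alpha^4,\alpha^5$, and the no-$\alpha^a$ sub-case, remain to be examined. Inside each remaining sub-case, writing $p$ for the number of pure-$\alpha$ vertices and $q$ for the number of $\beta\gamma\cdots$ vertices, the $\alpha$-count $3T$, the $\beta$-count $2Q$, and Euler's formula $V-E+F=2$ give a single Diophantine equation (for instance $p+2q=12$ in the $\alpha^5$ branches), which together with the integrality of $T$ and $Q=q/2$ leaves only a short list of candidate $(p,q)$. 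The sub-case of only $\alpha\beta\gamma$ collapses directly to $T=2,Q=3$, the triangular prism (first picture of Figure \ref{Fig-a3/am-abab-Tilings}); the sub-case $\{\alpha^3,\alpha\beta\gamma\}$ fails on non-integrality; and the candidates with very small $Q$ paired with too few $\beta\gamma\cdots$ vertices are killed by corner-counting, since a single quadrilateral's four corners cannot all be absorbed into fewer than four distinct vertices.

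The genuine geometric work is concentrated in the surviving candidates. The candidate $(p,q)=(2,10)$ in the $(\alpha^5,\alpha^2\beta\gamma)$ branch delivers the target sporadic tiling: I would seed an $\alpha^5$ vertex, extend by its five triangles, and use the angle-arrangement rules of Figure \ref{Fig-a4-abab-adj-square-quad} (in particular the prohibition on $\beta\,\bvert\,\beta$ and $\gamma\,\bvert\,\gamma$ at a $y$-edge, and the forced arrangement $\bvert\,\beta\vert\alpha\vert\alpha\vert\gamma\,\bvert$ at an $\alpha^2\beta\gamma$) to identify the unique belt of five quadrilaterals and show it closes up into another $\alpha^5$ polar cap, giving the second picture of Figure \ref{Fig-a3-abab-Sporadic-Tilings}. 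The main obstacle is the elimination of the other combinatorially feasible candidates, notably $(p,q)=(4,8)$ and $(6,6)$ in the $(\alpha^5,\alpha^2\beta\gamma)$ branch and the leftover candidates in the $(\alpha^4,\alpha\beta\gamma)$ and $(\alpha^5,\alpha\beta\gamma)$ branches; these pass the simple counts, so their elimination requires a deeper propagation analysis in the style of Propositions \ref{Prop-am-abab-albega} and \ref{Prop-am-abab-albe2}, pushing the tile neighbourhoods outward from an $\alpha^a$ cluster until a forbidden edge-adjacency or a vertex outside the admissible list forces a contradiction.
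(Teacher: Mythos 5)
Your opening reduction is exactly the paper's: no $\beta^2\cdots$, hence no $\gamma^2\cdots$ by Counting Lemma, hence $\beta\cdots=\gamma\cdots=\beta\gamma\cdots$ by Parity Lemma, and \eqref{Eq-a3-abab-bega} leaves exactly one of $\alpha\beta\gamma,\alpha^2\beta\gamma$; your angle-sum elimination of $\alpha^3,\alpha^4$ alongside $\alpha^2\beta\gamma$ also matches the paper's $\alpha<\frac{1}{2}\pi$ argument. After that, however, the proposal has a genuine gap. The Euler/Diophantine enumeration of $(p,q)$ generates sub-cases --- $(\alpha^4,\alpha\beta\gamma)$, $(\alpha^5,\alpha\beta\gamma)$, and $(p,q)=(4,8),(6,6)$ in the $\alpha^5,\alpha^2\beta\gamma$ branch --- whose elimination you explicitly defer to an unspecified ``deeper propagation analysis.'' That deferred step is the actual content of the proposition; as written, the proof is not complete. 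Moreover, counting alone ($T=2$, $Q=3$) does not establish that the pure-$\alpha\beta\gamma$ case yields the prism: you still need the local construction showing each tile is forced.

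The missing idea that collapses all of this is the monohedral propagation argument the paper imports from Proposition \ref{Prop-am-abab-albega}. In the $\alpha\beta\gamma$ branch the admissible vertices are $\alpha\beta\gamma$ and $\alpha^a$, and $\alpha\beta\gamma$ contains only one $\alpha$, so $\alpha^2\cdots=\alpha^a$. At an $\alpha^a$ vertex all surrounding tiles are triangles; the far endpoint of each edge at that vertex receives two $\alpha$'s from the two adjacent triangles and is therefore again $\alpha^a$. Propagation covers the sphere with triangles, making the tiling monohedral --- contradicting dihedrality. Hence $\alpha^a$ is not a vertex at all, every one of your $(\alpha^a,\alpha\beta\gamma)$ sub-cases dies simultaneously, and $\alpha\beta\gamma$ being the unique vertex forces the prism by direct construction. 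In the $\alpha^2\beta\gamma$ branch the paper likewise needs no counting: seeding a single $\alpha^2\beta\gamma$ and repeatedly applying $\beta\gamma\cdots=\alpha^2\beta\gamma$ determines every tile, so the tiling is unique and the candidates $(4,8)$ and $(6,6)$ are artifacts of your counting scheme that never require separate treatment. You should replace the Euler enumeration by these two forced-propagation arguments.
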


\begin{proof} By \eqref{Eq-a3-abab-be2}, the hypothesis means that $\beta^2\cdots$ is not a vertex. Counting Lemma implies that $\gamma^2\cdots$ is also not a vertex. Parity Lemma further implies $\beta\cdots=\gamma\cdots=\beta\gamma\cdots$ with no $\beta,\gamma$ in the remainder. Then \eqref{Eq-a3-abab-bega} implies that exactly one of $\alpha\beta\gamma, \alpha^2\beta\gamma$ is a vertex.

If $\alpha\beta\gamma$ is a vertex, then the exact same argument in the proof of Proposition \ref{Prop-am-abab-albega} shows that $\alpha\beta\gamma$ is the only vertex of dihedral tilings. Starting at an $\alpha\beta\gamma$, we get the tiling as desired.

If $\alpha^2\beta\gamma$ is a vertex, then \eqref{Eq-a3-abab-bega} implies $\beta\cdots=\gamma\cdots=\beta\gamma\cdots=\alpha^2\beta\gamma$. On the other hand, $\alpha^2\beta\gamma$ and $\beta+\gamma>\pi$ imply $\alpha<\frac{1}{2}\pi$. Then $\alpha^a=\alpha^5$. By no $\alpha\beta^2$, we get 
\begin{align*}
\AVC = \{ \alpha^2\beta\gamma, \alpha^5 \}.
\end{align*}



Starting at an $\alpha^2\beta\gamma$, we determine tiles $T_1, T_2, T_3, T_4$ in the first picture of Figure \ref{Fig-a3-abab-Tiling-al2bega-al5}. Then $\beta_4\gamma_3\cdots=\alpha^2\beta\gamma$ determines $T_5, T_6$. For $\alpha_2\gamma_3\cdots=\alpha^2\beta\gamma$, the same argument repeats and we get the tiling in Figure \ref{Fig-a3-abab-Tiling-al2bega-al5} from the equatorial (first picture) and polar view (second picture). 


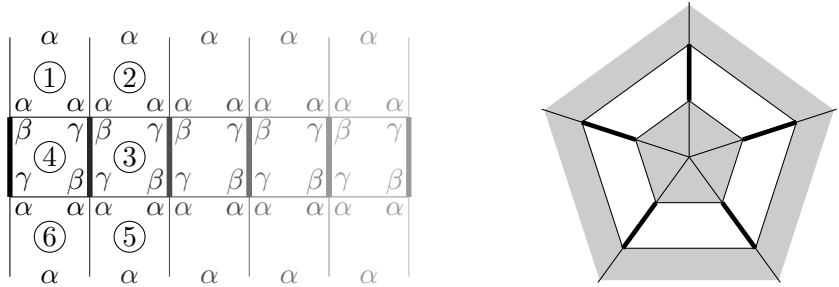
\begin{figure}[h!]
\centering
\begin{tikzpicture}[]

\tikzmath{
\s=1;
\r=0.75;
\th=360/4;
\x = \r*cos(0.5*\th);
\ph=360/5;
\sh=\r;
\ss=2;
\e=1.35;
}

\begin{scope}[] 

\tikzmath{
\tz=5;
\tzz=\tz-1;
}

\foreach \aa in {0,...,\tzz} {

\tikzmath{\c=100-\aa*15;}

\tikzset{shift={(\aa*2*\x,0)}}

\foreach \a in {0,...,3} {

\draw[black!\c, rotate=\a*\th]
	(0.5*\th:\r) -- (1.5*\th:\r)
;

}

\draw[black!\c, line width=2]
	(0.5*\th:\r) -- (-0.5*\th:\r)
	(1.5*\th:\r) -- (2.5*\th:\r)
;

}

\foreach \aa in {0,...,\tz} {

\tikzmath{\c=100-\aa*15;}

\draw[black!\c, shift={(\aa*2*\x,0)}]
	(-\x, \x) -- (-\x, 3*\x)
	(-\x, -\x) -- (-\x, -3*\x)
;

}

\foreach \aa in {0,...,\tzz} {

\tikzmath{\c=100-\aa*15;}

\foreach \a in {-1,1} {

\tikzset{yscale=\a}

\node at (0+\aa*2*\x,3*\x) {\small \textcolor{black!\c}{$\alpha$}};
\node at (-0.65*\x+\aa*2*\x,1.3*\x) {\small \textcolor{black!\c}{$\alpha$}};
\node at (0.65*\x+\aa*2*\x,1.3*\x) {\small \textcolor{black!\c}{$\alpha$}};

}

\foreach \a in {0,1} {

\tikzset{shift={(0+\aa*2*\x,0)}, rotate=\a*180}

\node at (-0.65*\x,0.625*\x) {\small \textcolor{black!\c}{$\beta$}};
\node at (0.65*\x,0.625*\x) {\small \textcolor{black!\c}{$\gamma$}};

}

}

\node[inner sep=1,draw,shape=circle] at (0,2*\x) {\small $1$};
\node[inner sep=1,draw,shape=circle] at (2*\x,2*\x) {\small $2$};
\node[inner sep=1,draw,shape=circle] at (2*\x,0) {\small $3$};
\node[inner sep=1,draw,shape=circle] at (0,0) {\small $4$};

\node[inner sep=1,draw,shape=circle] at (2*\x,-2*\x) {\small $5$};
\node[inner sep=1,draw,shape=circle] at (0,-2*\x) {\small $6$};

\end{scope} 

\begin{scope}[xshift=8.5*\s cm] 

\fill[gray!40]
	(90:\e*2*\r) -- (90-\ph:\e*2*\r) -- (90-2*\ph:\e*2*\r) -- (90-3*\ph:\e*2*\r) -- (90-4*\ph:\e*2*\r) --  cycle
;

\fill[white]
	(90:2*\r) -- (90-\ph:2*\r) -- (90-2*\ph:2*\r) -- (90-3*\ph:2*\r) -- (90-4*\ph:2*\r) --  cycle
;

\fill[gray!40]
	(90:\r) -- (90-\ph:\r) -- (90-2*\ph:\r) -- (90-3*\ph:\r) -- (90-4*\ph:\r) --  cycle
;

\foreach \a in {0,...,4} {

\draw[rotate=\ph*\a]
	(0,0) -- (90-1*\ph:\r)
	(90:\r) -- (90-1*\ph:\r)
	(90:\r) -- (90:2*\r)
	(90:2*\r) -- (90-1*\ph: 2*\r)
	(90:2*\r) -- (90:2.75*\r)
;

\draw[rotate=\ph*\a, line width=1.75]
	(90-1*\ph:\r) -- (90-1*\ph:2*\r) 
;

}

\end{scope}

\end{tikzpicture}
\caption{The tiling with $\alpha^2\beta\gamma, \alpha^5$}
\label{Fig-a3-abab-Tiling-al2bega-al5}
\end{figure}

The tiling is given by a pentagonal prism with triangulated pentagons. It can also be viewed as a tiling of earth map type in the first picture of Figure \ref{Fig-a3-abab-Tiling-al2bega-al5} and a timezone is given by a column of three tiles consisting of one triangle on top, another one at the bottom and one quadrilateral in the middle.

The angle sums of $\alpha^2\beta\gamma, \alpha^5$ and $\cos x = \cos^2 x + \sin^2 x \cos \alpha$ imply
\[
\alpha = \tfrac{2}{5}\pi, \quad
\beta + \gamma = \tfrac{6}{5}\pi, \quad
x = \cos^{-1} \tfrac{1}{\sqrt{5}}. 
\]
For fixed choices of $\beta, \gamma$, the governing identities for the quadrilaterals can be seen in \cite{ch}, whereby one can determine $y$.
\end{proof}

\begin{prop}\label{Prop-a3-abab-albe2} The dihedral tilings with $\alpha\beta^2$ are the first, the third and the fourth tiling in Figure \ref{Fig-a3-abab-Sporadic-Tilings}.
\end{prop}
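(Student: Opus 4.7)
The plan is to first derive the full list of admissible vertices under the hypothesis $\alpha\beta^2$, then to propagate local angle arrangements from such a vertex to either rule out or construct each candidate tiling.

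First I would use the angle sum $\alpha+2\beta=2\pi$ from $\alpha\beta^2$ together with $\alpha>\tfrac{1}{3}\pi$ and $\beta>\tfrac{1}{2}\pi$ to constrain $\beta\in(\tfrac{1}{2}\pi,\tfrac{5}{6}\pi)$ and $\alpha<\pi$. Counting Lemma applied to the $\beta^2$-vertex $\alpha\beta^2$ produces a $\gamma^2\cdots$-vertex, which by \eqref{Eq-a3/am-abab-ga2} is of the form $\alpha^{a\ge 2}\gamma^c$ or $\alpha^a\beta\gamma^c$. Combined with \eqref{Eq-a3-abab-bega} for $\beta\gamma\cdots$ and with the inequalities above, this reduces the candidates for the remaining vertex types to a short explicit list.

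Next I would split the argument according to which of $\alpha^3,\alpha^4,\alpha^5$ is a vertex, or none at all. Each such choice determines $\alpha$ exactly (hence $\beta$), and for each compatible $\gamma^2\cdots$ it determines $\gamma$ via a linear equation. I would then discard, via $\beta>\gamma$, $\beta+\gamma>\pi$, $\alpha+\beta+\gamma\le 2\pi$, Parity Lemma, and the exclusion of $\gamma\vert\gamma\cdots,\beta\vert\beta\cdots$ established before Proposition \ref{Prop-a3-abab-albega}, the incompatible combinations. I expect exactly three surviving AVCs, each to be matched with one of the target tilings.

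Then, for each surviving AVC, I would construct the tiling by starting at an $\alpha\beta^2$-vertex with the forced arrangement $\bvert\,\beta\vert\alpha\vert\beta\,\bvert$. Using the second and fourth pictures of Figure \ref{Fig-a4-abab-adj-square-quad} together with the prescribed $\gamma^2\cdots$ and $\beta\gamma\cdots$ forms, I would determine the vertices adjacent to the initial one, then propagate outward. In each case the propagation should close up into one of the first, third or fourth pictures of Figure \ref{Fig-a3-abab-Sporadic-Tilings}, realised as a triangle subdivision of a deformed prism.

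The main obstacle will be the sheer volume of case analysis: many candidate vertex combinations must be eliminated by a mix of angle inequalities and local geometric obstructions, and each surviving AVC must be shown to determine the tiling uniquely up to symmetry. The most delicate point is the $\alpha^3$-branch, where $\alpha=\beta=\tfrac{2}{3}\pi$ causes notationally distinct vertices such as $\alpha^3$ and $\alpha\beta^2$ to coincide geometrically; candidates must be re-enumerated carefully in that subcase to avoid both double-counting and missed configurations.
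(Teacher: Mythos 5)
Your overall methodology (Parity/Counting Lemmas plus angle inequalities to get a candidate vertex list, then local propagation) matches the paper's, but the organizing idea of your case split is where the plan fails. You split on which of $\alpha^3,\alpha^4,\alpha^5$ is a vertex so that $\alpha$, hence $\beta$, hence $\gamma$ are ``determined exactly'' by linear equations. In every tiling that actually arises under the hypothesis $\alpha\beta^2$, no pure $\alpha$-vertex occurs: the vertex sets that survive are $\{\alpha\beta^2,\alpha^2\beta\gamma,\alpha^3\gamma^2\}$, $\{\alpha\beta^2,\alpha^3\gamma^2,\alpha^5\}$ and $\{\alpha\beta^2,\alpha^2\gamma^2,\alpha^4,\alpha^5\}$, whose angle sums are linearly dependent ($\beta=\pi-\tfrac12\alpha$ and $\gamma=\pi-\tfrac32\alpha$ or $\gamma=\pi-\alpha$), leaving $\alpha$ as a free parameter; the listed $\alpha^a$ are only candidates and do not appear in the first or fourth tilings. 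So the decisive branch of your split is ``none of $\alpha^3,\alpha^4,\alpha^5$ is a vertex'', where your claimed mechanism yields nothing and the entire enumeration must still be carried out with inequalities. The anticipated correspondence ``three surviving AVCs, each matched with one tiling'' is also not what happens: the candidate $\{\alpha\beta^2,\alpha^4\gamma^2\}$ passes every angle-sum, parity and adjacency filter and is eliminated only by propagating the neighbourhood of $\alpha^4\gamma^2$ to a contradiction (Figure \ref{Fig-a3-abab-AAD-albe2-al4ga2}), while the single set $\{\alpha\beta^2,\alpha^2\beta\gamma,\alpha^3\gamma^2\}$ produces two of the three tilings, depending on the choice $\varphi,\theta=\beta$ or $\gamma$ in Figure \ref{Fig-a3-abab-Tiling-albe2-al3ga2}. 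The paper's split --- on whether $\alpha^2\beta\gamma$ is a vertex, i.e.\ whether $2\alpha+\beta+\gamma=2\pi$ --- is the one that actually controls $\beta\gamma\cdots$ and hence $\gamma\cdots$.

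The second gap is the $\alpha^3$ branch, which you flag as a delicate re-enumeration with $\alpha=\beta=\tfrac23\pi$. It is not a case to be enumerated; it is impossible, and the proof of impossibility is a step you must supply. The paper's argument (for the more general situation $\alpha\ge\beta$): $\alpha\beta^2$ and $\beta>\gamma$ exclude $\alpha\beta\gamma$, and $\alpha+\gamma\ge\beta+\gamma>\pi$ together with \eqref{Eq-a3-abab-bega} excludes all other $\beta\gamma\cdots$ vertices; Counting Lemma then forces a vertex $\gamma^2\cdots=\alpha^{a\ge2}\gamma^c$, and $2\gamma>\alpha$ (from $\beta+\gamma>\pi$ and $\alpha+2\beta=2\pi$) gives $2\pi\ge2\alpha+2\gamma>3\alpha$, i.e.\ $\alpha<\tfrac23\pi$, whereas $\alpha\ge\beta$ and $\alpha+2\beta=2\pi$ force $\alpha\ge\tfrac23\pi$. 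Relatedly, several eliminations you defer to ``local geometric obstructions'' are not optional polish but load-bearing: for instance $\alpha\beta\gamma^3$ is killed (in the case $\alpha^2\beta\gamma$ is not a vertex) by showing the arrangement $\bvert\,\gamma\vert\alpha\vert\gamma\,\bvert$ forces two adjacent $\beta$'s in one tile (Figure \ref{Fig-a3-abab-albe2-AAD-gaalga}). Without these propagation arguments the candidate list does not close, so the proposal as written is a plan whose hardest steps remain open.
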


\begin{proof} By \eqref{Eq-a3-abab-be2}, we have $\beta^2\cdots=\alpha\beta^2$. By $\beta>\gamma$ and $\alpha\beta^2$, we know that $\alpha\beta\gamma$ is not a vertex. By $\beta+\gamma>\pi$ and $\alpha\beta^2$, we get $2\gamma>\alpha$. 

Assume $\alpha\ge \beta$. By $\alpha\beta^2$ and \eqref{Eq-a3-abab-bega}, we know that $\alpha^2\beta\gamma, \alpha^2\beta\gamma^3, \alpha^2\beta\gamma^3$ are not vertices. By $\alpha+\gamma\ge\beta+\gamma>\pi$, we know that $\alpha\beta\gamma^3$ is not a vertex. Combined with no $\alpha\beta\gamma$, none of \eqref{Eq-a3-abab-bega} is a vertex. Hence $\beta\gamma\cdots$ is not a vertex. 

By $\alpha\beta^2$ and Counting Lemma, we know that $\gamma^2\cdots$ is a vertex. The same argument in Proposition \ref{Prop-am-abab-albe2} shows $\beta>\frac{2}{3}\pi>\alpha$, contradicting the assumption $\alpha\ge \beta$.


Hence we have $\alpha < \beta$. Recall $\alpha^a=\alpha^3, \alpha^4, \alpha^5$. By $\beta>\alpha$ and $\alpha\beta^2$, we get $\alpha^a=\alpha^4, \alpha^5$. By $2\gamma>\alpha>\frac{1}{3}\pi$ and $\beta+\gamma>\pi$, we have $2\alpha+\beta+3\gamma>2\pi$. Then by $\alpha\beta^2$ and \eqref{Eq-a3-abab-bega} and no $\alpha\beta\gamma$, we get $\beta\gamma\cdots=\alpha^2\beta\gamma, \alpha\beta\gamma^3$. Combined with $\beta^2\cdots=\alpha\beta^2$, we conclude $\beta\cdots = \alpha\beta^2, \alpha^2\beta\gamma, \alpha\beta\gamma^3$. Then by \eqref{Eq-a3/am-abab-ga2} and Parity Lemma, we conclude $\gamma\cdots=\alpha^2\beta\gamma, \alpha\beta\gamma^3, \alpha^{a\ge2}\gamma^{c\ge2}$ where $c$ is even. Hence we get the list of vertices below,
\begin{align}\label{Eq-list-albe2}
\alpha\beta^2, \alpha^2\beta\gamma, \alpha\beta\gamma^3, \alpha^4, \alpha^5, \alpha^{a\ge2}\gamma^c.
\end{align}
From the above, we have $\beta^2\cdots=\alpha\beta^2 = \vert \alpha \vert \beta \, \bvert \, \beta \vert$. Then $\beta \vert \beta \cdots$ is not a vertex. By the second picture of Figure \ref{Fig-a4-abab-adj-square-quad}, it implies that $\gamma \vert \gamma \cdots$ is also not a vertex. Moreover, recall the discussion after \eqref{Eq-a3/am-abab-ga2}, we have $2a \ge c \ge 2$ in $\alpha^{a\ge2}\gamma^c$.

We divide the discussion into the cases of $2\alpha+\beta+\gamma=2\pi$ and $2\alpha+\beta+\gamma\neq2\pi$.

\begin{case*}[$2\alpha+\beta+\gamma=2\pi$] By $\beta>\alpha$ and $\alpha\beta^2$, we have $\pi>\beta>\frac{2}{3}\pi>\alpha$. By $2\alpha+\beta+\gamma=2\pi$ and $\beta+\gamma>\pi$, we get $\alpha<\frac{1}{2}\pi$. By $2\alpha+\beta+\gamma=2\pi$ and $\alpha\beta^2$, we get $\alpha+\gamma=\beta>\frac{2}{3}\pi$ and $3\alpha+2\gamma=2\pi$.

From \eqref{Eq-list-albe2}, we know that $\gamma^2 \cdots=\alpha\beta\gamma^3, \alpha^{a\ge2}\gamma^c$ is a vertex. 

The assumption $2\alpha+\beta+\gamma=2\pi$ and $\alpha\beta^2, \alpha\beta\gamma^3$ imply $\alpha=\frac{1}{2}\pi$, contradicting $\alpha<\frac{1}{2}\pi$. So $\alpha\beta\gamma^3$ is not a vertex.

If $c\ge4$ in $\alpha^{a\ge2}\gamma^c$, then by $2a\ge c$ we have $\alpha^2\gamma^4\cdots$. By $2\gamma>\alpha$, we get $2\alpha+4\gamma>3\alpha+2\gamma=2\pi$, which means that $\alpha^2\gamma^4\cdots$ is not a vertex. 

If $c=2$ in $\alpha^{a\ge2}\gamma^c$, then by $3\alpha+2\gamma=2\pi$, we get $\alpha^a\gamma^2 = \alpha^3\gamma^2=$ $\bvert \, \gamma \vert \alpha \vert \alpha \vert \alpha \vert \gamma \, \bvert$. 

Hence we have $\gamma^2\cdots=\alpha^3\gamma^2$. By $2\gamma>\alpha$ and $\alpha^3\gamma^2$, we know that $\alpha^4$ is not a vertex. If $\alpha^5$ is also a vertex, then we get $\alpha=\gamma$ and then $\beta=2\gamma$. So the quadrilateral can be subdivided into two equilateral triangles and $x=y$, a contradiction. So $\alpha^5$ is not a vertex. 

We therefore update the vertices below,
\begin{align} \label{Eq-am-abab-AVC-albe2-al2bega-al3ga2}
\AVC = \{ \alpha\beta^2, \alpha^2\beta\gamma,  \alpha^3\gamma^2 \}.
\end{align}
From the above, we know $\beta^2 \cdots = \alpha\beta^2$ and $\gamma^2 \cdots= \alpha^3\gamma^2$ and $\beta\gamma \cdots=\alpha^2\beta\gamma$. 

The vertex $\alpha^3\gamma^2$ determines tiles $T_1, T_2, T_3, T_4, T_5$ in Figure \ref{Fig-a3-abab-Tiling-albe2-al3ga2}. Then $\beta_4\beta_5 \cdots=\alpha\beta^2$ determines $T_6$. Adjacent to $T_4, T_5$, we have $\varphi, \theta = \beta$ or $\gamma$. 

\begin{figure}[h!] 
\centering
\begin{tikzpicture}

\tikzmath{
\s=1;
\r=1.5;
\ph=360/6;
\x=\r*cos(0.5*\ph);
\y=\r*sin(0.5*\ph);
\R=2*\r;
\X=\x;
}

\begin{scope}[]

\foreach \a in {1,...,5} {

\draw[rotate=\a*\ph]
	(0:0) -- (90:\r) 
;

}

\foreach \a in {0,3} {

\draw[rotate=\a*\ph]
	(90-2*\ph:\r) -- (90-1*\ph:\r)
;

}

\foreach \aa in {-1, 1} {

\tikzset{xscale=\aa}

\draw[]
	(90-\ph:\r) -- (90+\ph:\r)
	(270:\r) -- (\x,-3*\y)	
	(-\x,-3*\y) -- (\x,-3*\y)
;	

\draw[line width=2]
	(0:0) -- (270:\r)
	(\x,-\y) -- (\x,-3*\y)	
;

\node at (0.25*\x, 0) {\small $\alpha$};
\node at (0.85*\x, 0.35*\x) {\small $\alpha$};
\node at (0.85*\x, -0.35*\x) {\small $\alpha$};

\node at (0.2*\x, -0.3*\x) {\small $\gamma$};
\node at (0.2*\x, -1.1*\x) {\small $\beta$};
\node at (0.8*\x, -0.75*\x) {\small $\beta$};
\node at (0.8*\x, -1.4*\x) {\small $\gamma$};

}

\node at (0, 0.15*\x) {\small $\alpha$};
\node at (0.6*\x, 0.5*\x) {\small $\alpha$};
\node at (-0.6*\x, 0.5*\x) {\small $\alpha$};

\node at (0,-1.125*\r) {\small $\alpha$};
\node at (0.65*\x,-1.65*\x) {\small $\alpha$};
\node at (-0.65*\x,-1.65*\x) {\small $\alpha$};

\node at (-1.2*\x, -0.75*\x) {\small $\theta$};
\node at (1.2*\x, -0.75*\x) {\small $\varphi$};

\node[inner sep=1,draw,shape=circle] at (-0.7*\x,0) {\small $1$};
\node[inner sep=1,draw,shape=circle] at (0,0.4*\x) {\small $2$};
\node[inner sep=1,draw,shape=circle] at (0.7*\x,0) {\small $3$};
\node[inner sep=1,draw,shape=circle] at (-0.45*\x,-0.8*\x) {\small $5$};
\node[inner sep=1,draw,shape=circle] at (0.45*\x,-0.8*\x) {\small $4$};

\node[inner sep=1,draw,shape=circle] at (0,-1.35*\r) {\small $6$};

\end{scope}

\begin{scope}[xshift=6*\s cm] 

\foreach \a in {1,...,5} {

\draw[rotate=\a*\ph]
	(0:0) -- (90:\r) 
;

}

\foreach \a in {0,1,...,3} {

\draw[rotate=\a*\ph]
	(90-2*\ph:\r) -- (90-1*\ph:\r)
;

}

\foreach \aa in {-1, 1} {

\tikzset{xscale=\aa}

\draw[]
	(270:\r) -- (\x,-3*\y)	
	(\x,-\y) -- (\x,-3*\y)	
	(\x,-3*\y) -- (1.6*\x, 0.5*\y)
	(\x,\y) -- (1.6*\x, 0.5*\y)
	(90:\r) -- (90:2*\x)
	(90:2*\x) -- (1.6*\x, 0.5*\y)
;	

\draw[line width=2]
	(\x,-\y) -- (\x,-3*\y)	
	(\x,\y) -- (1.6*\x, 0.5*\y)
;

\node at (0.25*\x, 0) {\small $\alpha$};
\node at (0.85*\x, 0.35*\x) {\small $\alpha$};
\node at (0.85*\x, -0.35*\x) {\small $\alpha$};

\node at (0.2*\x, -0.3*\x) {\small $\gamma$};
\node at (0.2*\x, -1.1*\x) {\small $\beta$};
\node at (0.8*\x, -0.75*\x) {\small $\beta$};
\node at (0.8*\x, -1.4*\x) {\small $\gamma$};

\node at (1.15*\x, -0.45*\x) {\small $\beta$};
\node at (1.1*\x, -1*\x) {\small $\gamma$};
\node at (1.1*\x, 0.35*\x) {\small $\gamma$};
\node at (1.45*\x, 0.15*\x) {\small $\beta$};

\node at (0.15*\x,1.25*\x) {\small $\beta$};
\node at (1*\x, 0.7*\x) {\small $\gamma$};
\node at (0.15*\x,1.65*\x) {\small $\gamma$};

\node at (1.75*\x, 0.25*\x) {\small $\alpha$};
\node at (0.35*\x,1.85*\x) {\small $\alpha$};
\node at (1.25*\x,-1.4*\x) {\small $\alpha$};

}

\draw[]
	(-\x,0.5*\r) -- (\x,0.5*\r)
;

\draw[] (0,0) circle (2*\x); 

\draw[line width=2]
	(0:0) -- (270:\r)
	(90:\r) -- (90:2*\x)
;

\node at (0, 0.15*\x) {\small $\alpha$};
\node at (0.6*\x, 0.675*\x) {\small $\alpha$};
\node at (-0.6*\x, 0.675*\x) {\small $\alpha$};

\node at (0, 1*\x) {\small $\alpha$};
\node at (0.6*\x, 0.475*\x) {\small $\alpha$};
\node at (-0.6*\x, 0.475*\x) {\small $\alpha$};

\node at (0,-1.15*\r) {\small $\alpha$};
\node at (0.75*\x,-1.725*\x) {\small $\alpha$};
\node at (-0.75*\x,-1.725*\x) {\small $\alpha$};

\node at (0, \r+\x) {\small $\alpha$};
\node at (1.15*\x, -3.15*\y) {\small $\alpha$};
\node at (-1.15*\x, -3.15*\y) {\small $\alpha$};

\node[inner sep=1,draw,shape=circle] at (-0.7*\x,0) {\small $1$};
\node[inner sep=1,draw,shape=circle] at (0,0.4*\x) {\small $2$};
\node[inner sep=1,draw,shape=circle] at (0.7*\x,0) {\small $3$};
\node[inner sep=1,draw,shape=circle] at (-0.45*\x,-0.8*\x) {\small $5$};
\node[inner sep=1,draw,shape=circle] at (0.45*\x,-0.8*\x) {\small $4$};

\node[inner sep=1,draw,shape=circle] at (0,-1.5*\r) {\small $6$};
\node[inner sep=1,draw,shape=circle] at (-1.25*\x,0*\x) {\small $7$};
\node[inner sep=1,draw,shape=circle] at (1.25*\x,0*\x) {\small $8$};
\node[inner sep=1,draw,shape=circle] at (0,0.75*\x) {\small $9$};
\node[inner sep=0.25,draw,shape=circle] at (-0.45*\x,1.2*\x) {\footnotesize $10$};
\node[inner sep=0.25,draw,shape=circle] at (0.45*\x,1.2*\x) {\footnotesize $11$};
\node[inner sep=0.25,draw,shape=circle] at (-1.2*\x,1.2*\x) {\footnotesize $12$};

\node[inner sep=0.25,draw,shape=circle] at (1.2*\x,1.2*\x) {\footnotesize $13$};

\node[inner sep=0.25,draw,shape=circle] at (2.25*\r,0) {\footnotesize $14$};

\end{scope}

\end{tikzpicture}
\caption{The tiling with $\alpha\beta^2, \alpha^3\gamma^2$}
\label{Fig-a3-abab-Tiling-albe2-al3ga2}
\end{figure}

If both $\varphi, \theta=\beta$, then we have tiles $T_1, ..., T_8$ in the second picture of Figure \ref{Fig-a3-abab-Tiling-albe2-al3ga2}. If the adjacent tile $T_9$ above $T_2$ is quadrilateral, then one of $\alpha_1\alpha_2\gamma_7\cdots, \alpha_2\alpha_3\gamma_8\cdots$ is $\alpha^3\gamma^2$ with angle arrangement $\bvert  \, \gamma \vert \alpha \vert \alpha \vert \gamma \, \bvert \cdots $, contradicting $\alpha^3\gamma^2=$ $\bvert \, \gamma \vert \alpha \vert \alpha \vert \alpha \vert \gamma \, \bvert$. Hence $T_9$ is a triangle as shown. By $\alpha_1\alpha_2\alpha_9\gamma_7\cdots$, $\alpha_2\alpha_3\alpha_9\gamma_8\cdots = \alpha^3\gamma^2$, we determine $T_{10}, T_{11}$. Then $\gamma_{10}\gamma_{11}\cdots=\alpha^3\gamma^2$ determines $T_{12}, T_{13}, T_{14}$. The tiling obtained is the fourth tiling in Figure \ref{Fig-a3-abab-Sporadic-Tilings} and $\alpha^2\beta\gamma$ does not appear as a vertex.

Up to mirror symmetry, it remains to consider $\theta = \gamma$. Then $\beta_2 \gamma_7\cdots=\alpha^2\beta\gamma$ determines $T_7, T_8$ in Figure \ref{Fig-a3-abab-Tiling-albe2-al2bega-al3ga2}. Similarly, $\beta_7\gamma_5\cdots=\alpha^2\beta\gamma$ determines $T_9$. By \eqref{Eq-am-abab-AVC-albe2-al2bega-al3ga2}, we have $\alpha_1\alpha_2\alpha_8\cdots=\alpha^3\gamma^2$. This further determines $T_{10}, T_{11}$. Then $\alpha_2\alpha_3\beta_{11}\cdots=\alpha^2\beta\gamma$ determines $T_{12}$. By $\beta_{12}\gamma_{11}\cdots=\alpha^2\beta\gamma$, we determine $T_{13}, T_{14}$. The tiling obtained is the third tiling in Figure \ref{Fig-a3-abab-Sporadic-Tilings}.

\begin{figure}[h!] 
\centering
\begin{tikzpicture}

\tikzmath{
\s=1;
\r=1.5;
\ph=360/6;
\x=\r*cos(0.5*\ph);
\y=\r*sin(0.5*\ph);
\R=2*\r;
\X=\x;
}

\foreach \a in {1,...,5} {

\draw[rotate=\a*\ph]
	(0:0) -- (90:\r) 
;

}

\foreach \a in {0,3} {

\draw[rotate=\a*\ph]
	(90-2*\ph:\r) -- (90-1*\ph:\r)
;

}

\foreach \aa in {-1, 1} {

\tikzset{xscale=\aa}

\draw[]
	(90-\ph:\r) -- (90+\ph:\r)
	(270:\r) -- (\x,-3*\y)	
	(-\x,-3*\y) -- (\x,-3*\y)
;	

\draw[line width=2]
	(0:0) -- (270:\r)
	(\x,-\y) -- (\x,-3*\y)	
;

\node at (0.25*\x, 0) {\small $\alpha$};
\node at (0.85*\x, 0.35*\x) {\small $\alpha$};
\node at (0.85*\x, -0.35*\x) {\small $\alpha$};

\node at (0.2*\x, -0.3*\x) {\small $\gamma$};
\node at (0.2*\x, -1.1*\x) {\small $\beta$};
\node at (0.8*\x, -0.75*\x) {\small $\beta$};
\node at (0.8*\x, -1.4*\x) {\small $\gamma$};

}

\draw[]
	(-2*\x,0) -- (-\x, \y)
	(-2*\x,0) -- (-\x, -\y)
	(\x,-3*\y) -- (0,-\R-0.5*\r)
	(-\x,-3*\y) -- (0,-\R-0.5*\r)
	%
	(\x,-3*\y) -- ([shift={(0,-0.5*\r)}]90-\ph:\R)
;

\arcThroughThreePoints[line width=2]{-2*\X,0}{[shift={(270:\r)}]90+2.5*\ph:1.3*\r}{270:\R+0.5*\r};

\draw[line width=2]
	(90-\ph:\r) -- ([shift={(0,-0.5*\r)}]90-\ph:\R)
	(90+\ph:\r) -- ([shift={(0,-0.5*\r)}]90+\ph:\R)
;

\draw[] (0,-0.5*\r) circle (\R);

\node at (0, 0.15*\x) {\small $\alpha$};
\node at (0.6*\x, 0.5*\x) {\small $\alpha$};
\node at (-0.6*\x, 0.5*\x) {\small $\alpha$};

\node at (0,-1.125*\r) {\small $\alpha$};
\node at (0.65*\x,-1.65*\x) {\small $\alpha$};
\node at (-0.65*\x,-1.65*\x) {\small $\alpha$};

\node at (-1.15*\x, -0.75*\x) {\small $\gamma$};
\node at (-1.85*\x, -0.275*\x) {\small $\beta$};
\node at (-1.15*\x, -3*\y) {\small $\beta$};
\node at (-0.5*\x,-2.525*\x) {\small $\gamma$};

\node at (-1.75*\x, 0) {\small $\alpha$};
\node at (-1.15*\x, 0.35*\x) {\small $\alpha$};
\node at (-1.15*\x, -0.35*\x) {\small $\alpha$};

\node at (-1.5*\x, 0.4*\x) {\small $\gamma$};
\node at (-1.95*\x, 0.4*\x) {\small $\beta$};
\node at (-2.1*\x, 0) {\small $\beta$};
\node at (-1*\x,-2.5*\x) {\small $\gamma$};

\node at (\x, 0.75*\x) {\small $\beta$};
\node at (-\x, 0.75*\x) {\small $\gamma$};
\node at (1.725*\x, 0.75*\x) {\small $\gamma$};
\node at (-1.725*\x, 0.75*\x) {\small $\beta$};

\node at (1.15*\x, -0.5*\x) {\small $\beta$};
\node at (1.15*\x, 0.4*\x) {\small $\gamma$};
\node at (1.15*\x, -1.15*\x) {\small $\gamma$};
\node at (1.75*\x, 0.35*\x) {\small $\beta$};

\node at (1.15*\x,-1.75*\x) {\small $\alpha$};
\node at (0.25*\x,-\R-0.4*\r) {\small $\alpha$};
\node at (2*\x, 0.25*\x) {\small $\alpha$};

\node at (-0.75*\x,-1.85*\x) {\small $\alpha$};
\node at (0.75*\x,-1.85*\x) {\small $\alpha$};
\node at (0,-\R-0.3*\r) {\small $\alpha$};

\node at (0,-\R-0.6*\r) {\small $\alpha$};
\node at (2.1*\x, 0.65*\x) {\small $\alpha$};
\node at (-2.1*\x, 0.65*\x) {\small $\alpha$};

\node[inner sep=1,draw,shape=circle] at (-0.7*\x,0) {\small $1$};
\node[inner sep=1,draw,shape=circle] at (0,0.4*\x) {\small $2$};
\node[inner sep=1,draw,shape=circle] at (0.7*\x,0) {\small $3$};
\node[inner sep=1,draw,shape=circle] at (-0.45*\x,-0.8*\x) {\small $5$};
\node[inner sep=1,draw,shape=circle] at (0.45*\x,-0.8*\x) {\small $4$};

\node[inner sep=1,draw,shape=circle] at (0,-1.325*\r) {\small $6$};
\node[inner sep=1,draw,shape=circle] at (-1.45*\x,-1*\r) {\small $7$};
\node[inner sep=1,draw,shape=circle] at (-1.3*\x,0) {\small $8$};
\node[inner sep=1,draw,shape=circle] at (0,-1.8*\r) {\small  $9$};
\node[inner sep=0.25,draw,shape=circle] at (-2*\x,-1*\r) {\footnotesize $10$};
\node[inner sep=0.25,draw,shape=circle] at (0,1*\x) {\footnotesize $11$};
\node[inner sep=0.25,draw,shape=circle] at (1.35*\x,0) {\footnotesize $12$};
\node[inner sep=0.25,draw,shape=circle] at (1.5*\x,-1.5*\x) {\footnotesize $13$};
\node[inner sep=0.25,draw,shape=circle] at (2.75*\x,-0.5*\x) {\footnotesize $14$};

\end{tikzpicture}
\caption{The tiling with $\alpha\beta^2, \alpha^2\beta\gamma, \alpha^3\gamma^2$}
\label{Fig-a3-abab-Tiling-albe2-al2bega-al3ga2}
\end{figure}

The tilings in Figures \ref{Fig-a3-abab-Tiling-albe2-al3ga2}, \ref{Fig-a3-abab-Tiling-albe2-al2bega-al3ga2} are given by the hexagonal prism with triangulated hexagons. By $\AVC$ \eqref{Eq-am-abab-AVC-albe2-al2bega-al3ga2}, we get
\begin{align} \label{Eq-am-abab-angles-albe2-al2bega-al3ga2}
\beta= \pi - \tfrac{1}{2}\alpha, \quad
\gamma = \pi - \tfrac{3}{2}\alpha.
\end{align}

When $x=y$, from \cite[Equation (2.10)]{cl3} we have
\begin{align*} 
\cot^2 \tfrac{1}{2}\alpha + \tfrac{\cos \frac{2}{3}\pi}{ \sin^2 \frac{1}{2}\alpha } = \cot \tfrac{1}{2}\beta \cot \tfrac{1}{2}\gamma.
\end{align*}
Substitute \eqref{Eq-am-abab-angles-albe2-al2bega-al3ga2} into the above, we get
\begin{align*}
5t^4 - 10t^2 + 1 = 0, \quad \text{where } t:=\tan \tfrac{1}{4}\alpha.
\end{align*}
The roots are
\begin{align*}
t = \pm \sqrt{ \sqrt{5} - 2 }, \  \pm  \sqrt{ \sqrt{5} + 2 }.
\end{align*}
For $t=\tan \frac{1}{4}\alpha$ and $\beta>\alpha>0$ and $\gamma>0$ and $\cos x = \cos^2 x + \sin^2 x \cos \alpha$ (cosine law on the equilateral triangle with $\alpha$), we further get
\begin{align*}
\alpha = \tfrac{2}{5}\pi, \quad
\beta  = \tfrac{4}{5}\pi, \quad
\gamma = \tfrac{2}{5}\pi, \quad
x = \cos^{-1} \tfrac{1}{\sqrt{5}}.
\end{align*}
Then for $x\neq y$, the tiling is the perturbation of the equality case. 
\end{case*}

\begin{case*}[$2\alpha+\beta+\gamma \neq 2\pi$] By $2\alpha+\beta+\gamma \neq 2\pi$, we know that $\alpha^2\beta\gamma$ is not a vertex. Then by \eqref{Eq-list-albe2}, we have $\beta\cdots=\alpha\beta^2, \alpha\beta\gamma^3$. By no $\gamma\vert\gamma\cdots$, the latter has a unique angle arrangement $\bvert \, \beta \vert \gamma \, \bvert \, \gamma \vert \alpha \vert \gamma \, \bvert$. This further implies $\alpha\vert\beta\cdots=\alpha\beta^2$. 

The angle arrangement $\bvert \, \gamma \vert \alpha \vert \gamma \, \bvert$ determines $T_1, T_2, T_3$ in Figure \ref{Fig-a3-abab-albe2-AAD-gaalga}. By $\alpha_1 \vert \beta_2 \cdots, \alpha_1\vert\beta_3 \cdots=\alpha\beta^2$, we get two consecutive $\beta$'s in $T_4$, a contradiction. So $\gamma \vert \alpha \vert \gamma \cdots$ is not a vertex. This implies $\beta\cdots=\alpha\beta^2$. 

\begin{figure}[h!] 
\centering
\begin{tikzpicture}

\tikzmath{
\r=0.8;
\th=360/3;
}

\begin{scope}[]

\foreach \t in {0,...,2} {

\draw[rotate=\t*\th]
	(90:\r) -- (90+\th:\r)
;

\node at (90+\t*\th:0.6*\r) {\small $\alpha$};

}

\foreach \t in {0,2} {

\draw[line width=1.75, rotate=\t*\th]
	(90-\th:\r) -- (90-\th:1.5*\r) 
;

}

\draw[line width=1.75]
	(90:\r) -- ([shift={(90:\r)}]45:0.5*\r) 
	(90:\r) -- ([shift={(90:\r)}]135:0.5*\r) 
;

\node at (90-0.15*\th:1*\r) {\small $\gamma$};
\node at (90+0.15*\th:1*\r) {\small $\gamma$};

\node at (90-0.85*\th:1*\r) {\small $\beta$};
\node at (90-1.15*\th:1*\r) {\small $\beta$};

\node at (90+1.15*\th:1*\r) {\small $\beta$};
\node at (90+0.85*\th:1*\r) {\small $\beta$};

\node at (88:1.5*\r) {\small $\cdots$};

\node[inner sep=1,draw,shape=circle] at (0,0) {\small $1$};
\node[inner sep=1,draw,shape=circle] at (90-0.5*\th:\r) {\small $2$};
\node[inner sep=1,draw,shape=circle] at (90+0.5*\th:\r) {\small $3$};
\node[inner sep=1,draw,shape=circle] at (270:\r) {\small $4$};

\end{scope}

\end{tikzpicture}
\caption{The angle arrangement of $\bvert \, \gamma \vert \alpha \vert \gamma \, \bvert$}
\label{Fig-a3-abab-albe2-AAD-gaalga}
\end{figure}

By $\beta\cdots=\alpha\beta^2$ and Parity Lemma, we know $\gamma\cdots=\gamma^2\cdots=\alpha^a\gamma^c$ where $a\ge c\ge2$ and $c$ is even. By $\alpha>\frac{1}{3}\pi$ and $\gamma>\frac{1}{6}\pi$, we get $4\alpha+4\gamma, 5\alpha+2\gamma > 2\pi$. Then by $a\ge c\ge2$ and $c$ being even, we get $\alpha^a\gamma^2=\alpha^2\gamma^2, \alpha^3\gamma^2, \alpha^4\gamma^2$, which are mutually exclusive. When $\alpha^4\gamma^2$ is a vertex, then $2\gamma > \alpha$ implies that $\alpha^4, \alpha^5$ are not vertices. Similarly, $\alpha^3\gamma^2$ implies no $\alpha^4$. Hence \eqref{Eq-list-albe2} gives
\begin{align*}
\AVC &= \{ \alpha\beta^2, \alpha^2\gamma^2, \alpha^4, \alpha^5 \}; \\
\AVC &= \{ \alpha\beta^2, \alpha^3\gamma^2, \alpha^5 \}; \\
\AVC &= \{ \alpha\beta^2, \alpha^4\gamma^2 \}.
\end{align*}
From the above, we have $\beta\cdots=\alpha\beta^2$ and $\gamma\cdots=\alpha^2\gamma^2, \alpha^3\gamma^2, \alpha^4\gamma^2$. 

For $\AVC = \{ \alpha\beta^2, \alpha^4\gamma^2 \}$, the vertex $\alpha^4\gamma^2$ determines tiles $T_1, T_2,..., T_6$ in Figure \ref{Fig-a3-abab-AAD-albe2-al4ga2}. Then $\alpha_1\beta_6\cdots=\alpha\beta^2$ determines $T_7$. By $\alpha_1\gamma_7\cdots=\alpha^4\gamma^2$, we further determine $T_8, T_9, T_{10}$. By mirror symmetry, we also get $T_{11}, ..., T_{15}$. Then $\alpha_2\alpha_3\alpha_{10}\alpha_{16}\cdots=\alpha^4\gamma^2$ determines $T_{16}, T_{17}$. It implies $\alpha^2\beta\cdots$, a contradiction.

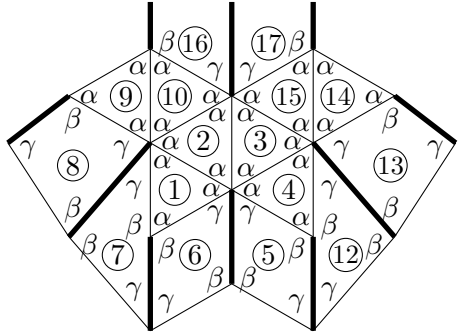
\begin{figure}[h!] 
\centering
\begin{tikzpicture}

\tikzmath{
\s=1;
\r=1.25;
\ph=360/6;
\x=\r*cos(0.5*\ph);
\y=\r*sin(0.5*\ph);
}

\foreach \a in {0,1,...,5} {

\draw[rotate=\a*\ph]
	(0:0) -- (90:\r) 
;

}

\foreach \a in {0,1,...,3} {

\draw[rotate=\a*\ph]
	(90-2*\ph:\r) -- (90-1*\ph:\r)
;

}

\foreach \aa in {-1, 1} {

\tikzset{xscale=\aa}

\draw[]
	(270:\r) -- (\x,-3*\y)	
	(\x,-\y) -- (\x,-3*\y)	
	(90:\r) -- (\x,3*\y)	
	(\x, \y) -- (\x,3*\y)	
	(\x, \y) -- (2*\x, -\y)
	(\x,-3*\y) -- (2*\x, -\y)
	(\x, \y) -- (2*\x, 2*\y)
	(\x, 3*\y) -- (2*\x, 2*\y)
	(2*\x, 2*\y) -- (2.75*\x, \y)
	(2*\x, -\y) -- (2.75*\x, \y)
	%
;	

\draw[line width=2]
	(\x,-\y) -- (\x,-3*\y)	
	(\x, \y) -- (2*\x, -\y)
	(2*\x, 2*\y) -- (2.75*\x, \y)
	(\x,3*\y) --  (\x,4*\y)	 
;

\node at (0.25*\x, 0) {\small $\alpha$};
\node at (0.85*\x, 0.35*\x) {\small $\alpha$};
\node at (0.85*\x, -0.35*\x) {\small $\alpha$};

\node at (0.175*\x, 0.25*\x) {\small $\alpha$};
\node at (0.175*\x, 0.925*\x) {\small $\alpha$};
\node at (0.75*\x, 0.575*\x) {\small $\alpha$};

\node at (0.2*\x, -0.3*\x) {\small $\gamma$};
\node at (0.2*\x, -1.1*\x) {\small $\beta$};
\node at (0.8*\x, -0.75*\x) {\small $\beta$};
\node at (0.8*\x, -1.4*\x) {\small $\gamma$};

\node at (1.2*\x, -0.45*\x) {\small $\beta$};
\node at (1.2*\x, 0) {\small $\gamma$};
\node at (1.75*\x, -0.7*\x) {\small $\beta$};
\node at (1.2*\x, -1.25*\x) {\small $\gamma$};

\node at (1.35*\x, 0.5*\x) {\small $\gamma$};
\node at (1.95*\x, 0.85*\x) {\small $\beta$};
\node at (1.95*\x, -0.25*\x) {\small $\beta$};
\node at (2.5*\x, 0.5*\x) {\small $\gamma$};

\node at (1.75*\x, 1.15*\x) {\small $\alpha$};
\node at (1.15*\x, 0.8*\x) {\small $\alpha$};
\node at (1.15*\x, 1.5*\x) {\small $\alpha$};

\node at (0.25*\x, 1.15*\x) {\small $\alpha$};
\node at (0.85*\x, 0.8*\x) {\small $\alpha$};
\node at (0.85*\x, 1.5*\x) {\small $\alpha$};



\node at (0.2*\x, 1.45*\x) {\small $\gamma$};
\node at (0.8*\x, 1.8*\x) {\small $\beta$};

}

\draw[line width=2]
	(0,0) -- (270:\r) 
	(90:\r) -- (90:2*\r)
;

\node[inner sep=1,draw,shape=circle] at (-0.7*\x,0) {\small $1$};
\node[inner sep=1,draw,shape=circle] at (-0.35*\x,0.6*\x) {\small $2$};
\node[inner sep=1,draw,shape=circle] at (0.35*\x,0.6*\x) {\small $3$};
\node[inner sep=1,draw,shape=circle] at (0.7*\x,0) {\small $4$};
\node[inner sep=1,draw,shape=circle] at (0.45*\x,-0.8*\x) {\small $5$};
\node[inner sep=1,draw,shape=circle] at (-0.45*\x,-0.8*\x) {\small $6$};

\node[inner sep=1,draw,shape=circle] at (-1.4*\x,-0.8*\x) {\small $7$};
\node[inner sep=1,draw,shape=circle] at (-1.95*\x,0.3*\x) {\small $8$};
\node[inner sep=1,draw,shape=circle] at (-1.3*\x,1.15*\x) {\small $9$};
\node[inner sep=0.25,draw,shape=circle] at (-0.7*\x,1.15*\x) {\footnotesize $10$};
\node[inner sep=0.25,draw,shape=circle] at (-0.45*\x,1.8*\x) {\footnotesize $16$};

\node[inner sep=0.25,draw,shape=circle] at (1.4*\x,-0.8*\x) {\footnotesize $12$};
\node[inner sep=0.25,draw,shape=circle] at (1.95*\x,0.3*\x) {\footnotesize $13$};
\node[inner sep=0.25,draw,shape=circle] at (1.3*\x,1.15*\x) {\footnotesize $14$};
\node[inner sep=0.25,draw,shape=circle] at (0.7*\x,1.15*\x) {\footnotesize $15$};
\node[inner sep=0.25,draw,shape=circle] at (0.45*\x,1.8*\x) {\footnotesize $17$};

\end{tikzpicture}
\caption{The angle arrangement of $\alpha^4\gamma^2$}
\label{Fig-a3-abab-AAD-albe2-al4ga2}
\end{figure}

For $\AVC = \{ \alpha\beta^2, \alpha^3\gamma^2, \alpha^5 \}$, the vertex $\alpha^3\gamma^2$ determines tiles $T_1, T_2, ..., T_6$ in the first picture of Figure \ref{Fig-a3-abab-Tiling-albe2-al3ga2}. By $\beta\cdots=\alpha\beta^2$, we have $\varphi, \theta=\beta$. The same argument in Figure \ref{Fig-a3-abab-Tiling-albe2-al3ga2} determines the same tiling in the second picture.

\begin{figure}[h!] 
\centering
\begin{tikzpicture}

\tikzmath{
\s=1;
}

\begin{scope}[]

\tikzmath{
\r=0.8;
\th=90;
\x=\r*cos(0.5*\th);
\X=\x;
\ps = asin ( 2*\x/(2.9*\x) );
}

\foreach \aa in {-1,1} {

\tikzset{shift={(\aa*\x,0)}}

\foreach \a in {0,...,3} {

\draw[rotate=\a*\th]
	(0.5*\th:\r) -- (1.5*\th:\r)
;

}

}

\foreach \aa in {-1,1} {

\draw[xscale=\aa]
	(0,3*\x) -- (2*\x, \x)
;

}

\draw[]
	(0,\x) -- (0,3*\x)
	(0,3*\x) -- (0,4*\x)
;

\draw[] (0,1.1*\x) circle (2.9*\x);

\foreach \aa in {-1,1} {

\tikzset{shift={(\aa*\x,0)}}

\foreach \a in {0,2} {

\tikzset{xscale=\aa}

\node at (0.5*\th+\a*\th:0.65*\r) {\small $\beta$};
\node at (1.5*\th+\a*\th:0.65*\r) {\small $\gamma$};

}

}

\foreach \aa in {-1,1} {

\tikzset{xscale=\aa}

\node at (0.3*\x,1.25*\x) {\small $\alpha$};
\node at (0.3*\x,2.35*\x) {\small $\alpha$};
\node at (1.35*\x,1.25*\x) {\small $\alpha$};

\node at (0.35*\x,3*\x) {\small $\gamma$};
\node at (0.35*\x,3.55*\x) {\small $\beta$};
\node at (2.25*\x,1*\x) {\small $\beta$};
\node at (2.25*\x,-0.45*\x) {\small $\gamma$};

}

\node at (0,-1.225*\x) {\small $\alpha$};
\node at (1.35*\x,-1.225*\x) {\small $\alpha$};
\node at (-1.35*\x,-1.225*\x) {\small $\alpha$};

\node at (0,4.25*\x) {\small $\alpha$};
\node at (2.35*\x,-\x) {\small $\alpha$};
\node at (-2.35*\x,-\x) {\small $\alpha$};

\draw[line width=2]
	(0,\x) --(0,-\x)
	(-2*\x,\x) --(-2*\x,-\x)
	(2*\x,\x) --(2*\x,-\x)
	(0,3*\x) -- (0,4*\x)
;

\node[inner sep=1,draw,shape=circle] at (-0.7*\x,1.7*\x) {\small $1$};
\node[inner sep=1,draw,shape=circle] at (0.7*\x,1.7*\x) {\small $2$};
\node[inner sep=1,draw,shape=circle] at (-1*\x,0) {\small $4$};
\node[inner sep=1,draw,shape=circle] at (1*\x,0) {\small $3$};

\node[inner sep=1,draw,shape=circle] at (-1.5*\x,2.5*\x) {\small $5$};
\node[inner sep=1,draw,shape=circle] at (1.5*\x,2.5*\x) {\small $6$};

\end{scope}

\end{tikzpicture}
\caption{The tiling with $\alpha\beta^2, \alpha^2\gamma^2$}
\label{Fig-a3-abab-Tiling-albe2-al2ga2}
\end{figure}
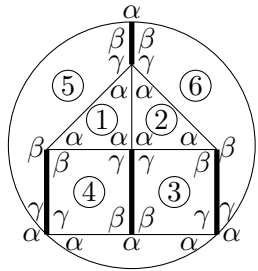

For $\AVC = \{ \alpha\beta^2, \alpha^2\gamma^2, \alpha^4, \alpha^5 \}$, the vertex $\alpha^2\gamma^2$ determines $T_1, T_2, T_3, T_4$ in the second picture of Figure \ref{Fig-a3-abab-Tiling-albe2-al2ga2}. Then $\alpha_1\beta_4\cdots=\alpha\beta^2$ determines $T_5$ and by mirror symmetry we also get $T_6$. By $\beta_3\beta_4\cdots, \beta_5\beta_6\cdots=\alpha\beta^2$, we determine the remaining two tiles. 

The tiling in the second picture of Figure \ref{Fig-a3-abab-Tiling-albe2-al2ga2} is given by a rhombus prism with triangulated rhombi. For  $x=y$, the same argument in the previous case gives
\begin{align*}
&\alpha = 4 \tan^{-1} (\sqrt{17} - 4)^{\frac{1}{2}},&
&\beta = \pi - 2 \tan^{-1} (\sqrt{17} - 4)^{\frac{1}{2}},& \\
&\gamma = \pi - 2 \tan^{-1} (\sqrt{17} - 4)^{\frac{1}{2}},& 
&x = 2 \tan^{-1} ( 1 + \tfrac{1}{2}( \sqrt{17} - 5 ) )^{\frac{1}{2}}.&
\end{align*}
The approximate values to the above are 
\begin{align*}
\alpha \approx 0.42965\pi, \quad
\beta \approx 0.78518\pi, \quad
\gamma \approx 0.57035\pi, \quad
x \approx 0.40941\pi.
\end{align*}
The case of $x\neq y$ is a perturbation of the equality case and the identities between them and the angles can be seen in \cite{ch}. \qedhere
\end{case*}

\end{proof}

\begin{prop}\label{Prop-a3-abab-al2be2}  The dihedral tiling with $\alpha^2\beta^2$ is the last tiling in Figure \ref{Fig-a3-abab-Sporadic-Tilings}.
\end{prop}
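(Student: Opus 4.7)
The plan is to first pin down $\alpha+\beta=\pi$ from $\alpha^2\beta^2$, use it together with the standing inequalities to force a unique $\AVC$, and then propagate from the rigid angle arrangement at $\alpha^3\gamma^2$ to build the tiling.

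The relation $\alpha+\beta=\pi$ together with $\beta>\tfrac{1}{2}\pi$ and $\beta+\gamma>\pi$ gives $\tfrac{1}{3}\pi<\alpha<\gamma<\beta<\pi$ and in particular $\alpha<\tfrac{1}{2}\pi$. Substituting $\alpha+\beta=\pi$ into the angle sum of each vertex in \eqref{Eq-a3-abab-bega} yields a forbidden equality: $\gamma=\pi$ for $\alpha\beta\gamma$, $\gamma=\beta$ for $\alpha^2\beta\gamma$, $\gamma=\tfrac{1}{3}\pi$ for $\alpha\beta\gamma^3$, and $\alpha+3\gamma=\pi$ or $\alpha+5\gamma=\pi$ forcing $\gamma<\alpha$ for $\alpha^2\beta\gamma^3$ and $\alpha^2\beta\gamma^5$. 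So no vertex contains both $\beta$ and $\gamma$. Likewise $\alpha\beta^2$ gives $\beta=\pi$, $\alpha^3$ gives $\beta=\tfrac{1}{3}\pi$, and $\alpha^4$ gives $\beta=\tfrac{1}{2}\pi$, each contradicting $\pi>\beta>\tfrac{1}{2}\pi$. Hence by Parity and \eqref{Eq-a3/am-abab-ga2}, $\beta\cdots=\alpha^2\beta^2$ and $\gamma\cdots=\alpha^a\gamma^c$ with $c$ even, $a,c\ge 2$, $2a\ge c$.

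For $\alpha^a\gamma^c$, the constraint $\gamma>\alpha$ in $a\alpha+c\gamma=2\pi$ forces $\alpha<\tfrac{2\pi}{a+c}$; intersecting with $\alpha>\tfrac{1}{3}\pi$ leaves only $(a,c)=(2,2)$ or $(3,2)$. The coexistence of $\alpha^2\gamma^2$ with $\alpha^2\beta^2$ forces $\beta=\gamma$, and the only surviving pure-$\alpha$ option $\alpha^5$ (which would pin $\alpha=\tfrac{2}{5}\pi$, $\beta=\tfrac{3}{5}\pi$) combined with either $\alpha^2\gamma^2$ or $\alpha^3\gamma^2$ forces $\gamma\in\{\beta,\alpha\}$. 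Since the quadrilateral forces some $\gamma\cdots$ vertex to exist, $\alpha^5$ is also excluded. Therefore $\AVC=\{\alpha^2\beta^2,\alpha^3\gamma^2\}$, with $\beta=\pi-\alpha$, $\gamma=\pi-\tfrac{3}{2}\alpha$, and $\alpha\in(\tfrac{1}{3}\pi,\tfrac{2}{5}\pi)$.

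Because $\beta\vert\beta\cdots$ is not a vertex (the only $\beta^2\cdots$ is $\alpha^2\beta^2=\bvert\,\beta\vert\alpha\vert\alpha\vert\beta\,\bvert$), and hence by the second picture of Figure \ref{Fig-a4-abab-adj-square-quad} neither is $\gamma\vert\gamma\cdots$, the angle arrangement of $\alpha^3\gamma^2$ is uniquely $\bvert\,\gamma\vert\alpha\vert\alpha\vert\alpha\vert\gamma\,\bvert$. The construction begins at this wedge — three triangles flanked by two quadrilaterals sharing a $y$-edge at the two $\gamma$'s — and propagates: every newly exposed $\beta$ corner forces a fresh $\alpha^2\beta^2$, and every newly exposed $\gamma$ corner forces another $\alpha^3\gamma^2$ wedge, in the style of Propositions \ref{Prop-a3-abab-albega} and \ref{Prop-a3-abab-albe2}. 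This should close combinatorially into the hexagonal-prism-type structure with triangulated hexagonal caps in the last picture of Figure \ref{Fig-a3-abab-Sporadic-Tilings}. The main obstacle I anticipate is controlling this propagation globally so that the two triangulated hexagonal caps fit consistently with the equatorial belt of quadrilaterals without branching or contradiction; once combinatorial closure is verified, the existence of a suitable $\alpha\in(\tfrac{1}{3}\pi,\tfrac{2}{5}\pi)$ making the tiling geometrically spherical follows from the quadrilateral closure identity invoked in the $x=y$ case of Proposition \ref{Prop-a3-abab-albe2}.
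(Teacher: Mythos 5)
Your reduction of the vertex list follows the paper's route almost step for step ($\alpha+\beta=\pi$, hence $\gamma>\alpha$ and $\beta>\tfrac{1}{2}\pi>\alpha$; all of \eqref{Eq-a3-abab-bega} excluded; $\beta\cdots=\alpha^2\beta^2$; $\gamma\cdots=\alpha^3\gamma^2$ after killing $\alpha^2\gamma^2$), and those steps are correct. The critical divergence is your exclusion of $\alpha^5$: the paper keeps it and arrives at $\AVC=\{\alpha^2\beta^2,\alpha^5,\alpha^3\gamma^2\}$, only discarding $\alpha^3$ and $\alpha^4$ via $\tfrac{1}{3}\pi<\alpha<\tfrac{1}{2}\pi$. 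This is not a cosmetic difference, because the tiling you are asked to produce \emph{contains} $\alpha^5$ vertices: in the triangulated hexagonal cap, the three vertices of the central triangle are each surrounded by five triangles, and the paper's propagation explicitly closes the cap by reading $\alpha_6\alpha_7\alpha_9\alpha_{12}\cdots=\alpha^5$. So your claimed $\AVC=\{\alpha^2\beta^2,\alpha^3\gamma^2\}$ is incompatible with the tiling you then assert the propagation converges to. Carried through honestly, your propagation reaches a vertex $\alpha^4\cdots$ with no admissible completion and would conclude that no tiling with $\alpha^2\beta^2$ exists --- the negation of the proposition --- not the last tiling of Figure \ref{Fig-a3-abab-Sporadic-Tilings}.

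The source of the tension is worth naming: your elimination of $\alpha^5$ rests on the strict inequality $\beta+\gamma>\pi$ (via $\gamma>\alpha$), while the proposition's tiling has $\alpha^5$, $\alpha^2\beta^2$ and $\alpha^3\gamma^2$ simultaneously, which forces $\alpha=\gamma=\tfrac{2}{5}\pi$, $\beta=\tfrac{3}{5}\pi$, i.e.\ $\beta+\gamma=\pi$ exactly --- the boundary of the inequality you invoke. You cannot both use $\gamma>\alpha$ to delete $\alpha^5$ and then present a tiling in which $\gamma=\alpha$; your write-up must resolve this (either by not discarding $\alpha^5$ and following the paper's propagation, or by arguing explicitly why the boundary case should or should not be admitted). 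Separately, even granting the correct $\AVC$, the construction step is only a sketch: you acknowledge that the global closure of the two caps against the equatorial belt is unverified, whereas that deduction (the forced sequence $T_1,\dots,T_{12}$ and the two $\alpha^5$ closures) is the actual content of the second half of the paper's proof. As it stands the proposal does not establish the stated result.
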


\begin{proof} By $\alpha^2\beta^2$ and $\beta+\gamma > \pi$, we have $\gamma>\alpha$. It implies $\beta>\gamma>\alpha$. Then $\alpha^2\beta^2$ implies $\beta>\frac{1}{2}\pi>\alpha$.

By $\alpha^2\beta^2$ and \eqref{Eq-a3-abab-be2}, we get $\beta^2\cdots=\alpha^2\beta^2$. By $\alpha^2\beta^2$ and $\beta>\gamma$, we know that $\alpha\beta\gamma, \alpha^2\beta\gamma$ are not vertices. By $\alpha+\beta=\pi$ and $3\gamma>\pi$, we have $\alpha+\beta+3\gamma>2\pi$. This implies that $\alpha\beta\gamma^3, \alpha^2\beta\gamma^3, \alpha^2\beta\gamma^5$ are not vertices. So \eqref{Eq-a3-abab-bega} implies that $\beta\gamma\cdots$ is not a vertex. Hence $\beta\cdots=\beta^2\cdots=\alpha^2\beta^2$.

By Parity Lemma and \eqref{Eq-a3/am-abab-ga2} and no $\beta\gamma\cdots$, we get $\gamma\cdots=\gamma^2\cdots=\alpha^{a\ge2}\gamma^c$. By $\alpha^2\beta^2$ and $\beta>\gamma$, we know that $\alpha^2\gamma^2$ is not a vertex. Then Parity Lemma and $\gamma>\alpha>\frac{1}{3}\pi$ imply $\gamma\cdots=\alpha^3\gamma^2$.

With the knowledge of $\beta\cdots, \gamma\cdots$, the only remaining vertex is $\alpha^a$. By $\frac{1}{2}\pi>\alpha>\frac{1}{3}\pi$, we have $\alpha^a=\alpha^5$. Hence we get
\begin{align*}
\AVC = \{ \alpha^2\beta^2, \alpha^5, \alpha^3\gamma^2 \}.
\end{align*}
From the above, we have $\beta\cdots=\alpha^2\beta^2$ and $\gamma\cdots=\alpha^3\gamma^2$ and $\alpha^4\cdots=\alpha^5$.

A pair of $\alpha^2\beta^2, \alpha^3\gamma^2$ determine tiles $T_1, T_2, ..., T_7$ in Figure \ref{Fig-a3-abab-Tiling-al2be2-al3ga2}. In the first picture, $\alpha_7\beta_2\cdots=\alpha^2\beta^2$ and $\alpha_3\gamma_2\cdots=\alpha^3\gamma^2$ determine $T_8, T_9, T_{10}$. Then the same pattern repeats. Then $\alpha_6\alpha_7\alpha_9\alpha_{12}\cdots=\alpha^5$ gives the centre triangle in the second picture. Similarly, $\alpha_3\alpha_4\alpha_{11}\cdots=\alpha^4\cdots=\alpha^5$ gives the triangle in the exterior of the second picture.

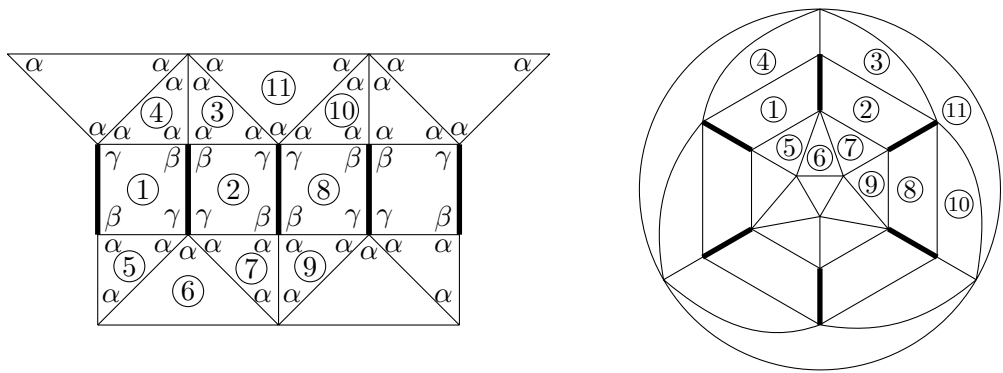
\begin{figure}[h!] 
\centering
\begin{tikzpicture}

\tikzmath{
\s=1;
}

\begin{scope}[] 

\tikzmath{
\s=1;
\r=0.85;
\th=360/4;
\x = \r*cos(0.5*\th);
\tz=4;
\tzz=\tz-1;
}

\foreach \aa in {0,...,\tzz} {

\tikzset{shift={(\aa*2*\x,0)}}

\foreach \a in {0,...,3} {

\draw[rotate=\a*\th]
	(0.5*\th:\r) -- (1.5*\th:\r)
;

}

\draw[line width=2]
	(0.5*\th:\r) -- (-0.5*\th:\r)
	(1.5*\th:\r) -- (2.5*\th:\r)
;

}

\foreach \aa in {1,3,...,\tz} {

\draw[shift={(\aa*2*\x,0)}]
	(-\x, \x) -- (-\x, 3*\x)
;

}

\foreach \aa in {0,2,...,\tz} {

\draw[shift={(\aa*2*\x,0)}]
	(-\x, -\x) -- (-\x, -3*\x)
;

}

\foreach \aa in {-1,1,...,\tz} {

\draw[shift={(\aa*2*\x,0)}]
	(\x, \x) -- (3*\x, 3*\x)
	(-\x, 3*\x) -- (\x, \x)
	(-\x, 3*\x) -- (3*\x, 3*\x)
;

\node at (-0.4*\x+\aa*2*\x,2.75*\x) {\small $\alpha$};
\node at (2.4*\x+\aa*2*\x,2.75*\x) {\small $\alpha$};

}

\foreach \aa in {0,2,...,\tzz} {

\draw[shift={(\aa*2*\x,0)}]
	(\x, -\x) -- (-\x, -3*\x)
	(\x, -\x) -- (3*\x, -3*\x)
	(-\x, -3*\x) -- (3*\x, -3*\x)
;

\node at (1*\x+\aa*2*\x,-1.4*\x) {\small $\alpha$};

}

\foreach \aa in {0,2,...,\tz} {

\node at (-\x+\aa*2*\x,1.35*\x) {\small $\alpha$};

}

\foreach \aa in {0,2,...,\tzz} {

%

%
\node at (-0.45*\x+\aa*2*\x,1.25*\x) {\small $\alpha$};
\node at (0.65*\x+\aa*2*\x,1.25*\x) {\small $\alpha$};
\node at (1.35*\x+\aa*2*\x,1.25*\x) {\small $\alpha$};
\node at (2.45*\x+\aa*2*\x,1.25*\x) {\small $\alpha$};
\node at (1.3*\x+\aa*2*\x,2.35*\x) {\small $\alpha$};
\node at (0.7*\x+\aa*2*\x,2.35*\x) {\small $\alpha$};

\node at (-0.65*\x+\aa*2*\x,-1.25*\x) {\small $\alpha$};
\node at (0.45*\x+\aa*2*\x,-1.25*\x) {\small $\alpha$};
\node at (1.55*\x+\aa*2*\x,-1.25*\x) {\small $\alpha$};
\node at (2.65*\x+\aa*2*\x,-1.25*\x) {\small $\alpha$};
\node at (2.65*\x+\aa*2*\x,-2.35*\x) {\small $\alpha$};
\node at (-0.7*\x+\aa*2*\x,-2.35*\x) {\small $\alpha$};

}

\foreach \aa in {0,2,...,\tzz} {

\foreach \a in {0,1} {

\tikzset{shift={(0+\aa*2*\x,0)}, rotate=\a*180}

\node at (-0.65*\x,0.65*\x) {\small $\gamma$};
\node at (0.65*\x,0.65*\x) {\small $\beta$};

}

\foreach \a in {0,1} {

\tikzset{shift={(2*\x+\aa*2*\x,0)}, rotate=\a*180}

\node at (-0.65*\x,0.65*\x) {\small $\beta$};
\node at (0.65*\x,0.65*\x) {\small $\gamma$};

}

}

\node[inner sep=1,draw,shape=circle] at (0,0) {\small $1$};
\node[inner sep=1,draw,shape=circle] at (2*\x,0) {\small $2$};
\node[inner sep=1,draw,shape=circle] at (1.65*\x,1.72*\x) {\small $3$};
\node[inner sep=1,draw,shape=circle] at (0.3*\x,1.7*\x) {\small $4$};
\node[inner sep=1,draw,shape=circle] at (-0.3*\x,-1.7*\x) {\small $5$};
\node[inner sep=1,draw,shape=circle] at (\x,-2.25*\x) {\small $6$};
\node[inner sep=1,draw,shape=circle] at (2.4*\x,-1.75*\x) {\small $7$};

\node[inner sep=0.25,draw,shape=circle] at (\x+2*\x,2.25*\x) {\footnotesize $11$};

\node[inner sep=1,draw,shape=circle] at (0+4*\x,0) {\small $8$};
\node[inner sep=1,draw,shape=circle] at (-0.3*\x+4*\x,-1.7*\x) {\small $9$};
\node[inner sep=0.25,draw,shape=circle] at (0.4*\x+4*\x,1.75*\x) {\footnotesize $10$};

\end{scope} 

\begin{scope}[xshift=9*\s cm]

\tikzmath{
\r=0.6;
\g=6;
\ph=360/\g;
\x=\r*cos(\ph/2);
\y=\r*sin(\ph/2);
\rr=2*\y/sqrt(2);
\h=3;
\th=360/\h;
\xx=\rr*cos(\th/2);
\R=2*\r/cos(60);
}


	(90:3*\r) -- (90+\ph:3*\r) --(90+2*\ph:3*\r) --(90+3*\ph:3*\r) -- (90+4*\ph:3*\r) -- (90+5*\ph:3*\r) -- cycle
;

	(90:1.5*\r) -- (90+\ph:1.5*\r) --(90+2*\ph:1.5*\r) --(90+3*\ph:1.5*\r) -- (90+4*\ph:1.5*\r) -- (90+5*\ph:1.5*\r) -- cycle
;

\foreach \t in {0,...,2} {

\draw[rotate=\t*\th]
	(90-\ph:0.6*\r) -- (90+\ph:0.6*\r)
	(90-\ph:0.6*\r) -- (90-\ph:1.75*\r) 
	(90-\ph:0.6*\r) -- (90:1.75*\r)
	(90+\ph:0.6*\r) -- (90:1.75*\r)
	(90:3*\r) -- (90:\R)
	(90:\R) to[out=200, in=80] (90+\ph:3*\r) 
	(90:\R) to[out=-20, in=110] (90-\ph:3*\r) 
	(90:\R) arc (90:90+\th:\R)
;

}

\foreach \p in {0,...,5} {

\draw[rotate=\p*\ph]
	(90-1*\ph:1.75*\r) -- (90:1.75*\r)
	(90:1.75*\r) -- (90:3*\r)
	(90-1*\ph:3*\r) -- (90:3*\r)
;

\draw[line width=2, rotate=\p*\ph]
	(90:1.75*\r) -- (90:3*\r)
;

}

\node[inner sep=0.5,draw,shape=circle] at (90+0.5*\ph:2.05*\r) {\footnotesize $1$};
\node[inner sep=0.5,draw,shape=circle] at (90-0.5*\ph:2.05*\r) {\footnotesize  $2$};
\node[inner sep=0.5,draw,shape=circle] at (90-0.4*\ph:3.1*\r) {\footnotesize  $3$};
\node[inner sep=0.5,draw,shape=circle] at (90+0.4*\ph:3.1*\r) {\footnotesize  $4$};
\node[inner sep=0.5,draw,shape=circle] at (90+0.6*\ph:1.15*\r) {\footnotesize  $5$};
\node[inner sep=0.5,draw,shape=circle] at (90:0.7*\r) {\footnotesize  $6$};
\node[inner sep=0.5,draw,shape=circle] at (90-0.6*\ph:1.15*\r) {\footnotesize  $7$};
\node[inner sep=0.5,draw,shape=circle] at (90-1.5*\ph:2*\r) {\footnotesize  $8$};
\node[inner sep=0.5,draw,shape=circle] at (90-1.4*\ph:1.15*\r) {\footnotesize  $9$};
\node[inner sep=0.25,draw,shape=circle] at (90-1.6*\ph:3.1*\r) {\scriptsize $10$};
\node[inner sep=0.25,draw,shape=circle] at (90-\ph:3.5*\r) {\scriptsize $11$};

\end{scope}

\end{tikzpicture}
\caption{The tiling with $\alpha^2\beta^2, \alpha^5, \alpha^3\gamma^2$}
\label{Fig-a3-abab-Tiling-al2be2-al3ga2}
\end{figure}

Hence we get the last tiling in Figure \ref{Fig-a3-abab-Sporadic-Tilings}. It is given by the hexagonal prism with triangulated hexagons.

The angle sums of $\alpha^2\beta^2, \alpha^5, \alpha^3\gamma^2$ and $\cos x = \cos^2 x + \sin^2 x \cos \alpha$ imply
\[
\alpha = \tfrac{2}{5}\pi, \quad
\beta = \tfrac{3}{5}\pi, \quad
\gamma = \tfrac{2}{5}\pi, \quad
x = \cos^{-1} \tfrac{1}{\sqrt{5}}.
\qedhere
\]
\end{proof}

\end{document}